\documentclass{amsart}
\usepackage{amssymb,amsthm}
\usepackage{amsfonts}
\usepackage{enumerate}
\usepackage{amsmath}
\usepackage[all,cmtip]{xy}
\usepackage{graphicx}
\usepackage[utf8]{inputenc}
\usepackage[english]{babel}
\usepackage{color}
\usepackage{hyperref}
\usepackage{pgf}
\usepackage{colortbl,xcolor}
\usepackage{multicol}
\usepackage{geometry}
\usepackage{bigints}



\theoremstyle{plain}

\newtheorem{thm}{Theorem}[section]
\newtheorem{defi}[thm]{Definition}
\newtheorem{cor}[thm]{Corollary}
\newtheorem{lemma}[thm]{Lemma}
\newtheorem{prop}[thm]{Proposition}
\newtheorem{example}[thm]{Example}
\newtheorem{rmk}[thm]{Remark}




\newcommand{\C}{\mathbb C}

\newcommand{\Id}{\textrm{Id}}
\newcommand{\Max}{\mathfrak M}

\newcommand{\R}{\mathbb R}

\newcommand{\Z}{\mathbb Z}

\definecolor{lightgrey}{cmyk}{0,0,0,0.30}
\definecolor{darkgrey}{cmyk}{0,0,0,0.70}
\definecolor{purple}{cmyk}{0.45,0.86,0,0}
\definecolor{darkblue}{cmyk}{1.7,0.7,0.3,0}
\definecolor{lightblue}{cmyk}{0.6,0.2,0.1,0}
\definecolor{midblue}{cmyk}{1.1,0.3,0.2,0}


\newcommand{\Ver}{Vert\,}
\newcommand{\Cone}{Cone\,}

\newcommand{\trans}{^{\mathbf{t}}}

\DeclareMathOperator{\M}{\mathrm{M}}
\DeclareMathOperator{\N}{\mathrm N}

\DeclareMathOperator{\Rnnoneg}{(\R_{\geq 0})^n}
\DeclareMathOperator{\Znnoneg}{(\Z_{\geq 0})^n}
\DeclareMathOperator{\vertice}{\mathbf{\nu}}


\begin{document}	
\title{On Archimedean Zeta Functions and Newton Polyhedra}

\author{Fuensanta Aroca}
	\address{Universidad Nacional Aut\'{o}noma de M\'{e}xico, 
		Unidad Cuernavaca. 
		A.P.273-3 C.P. 62251.
		Cuernavaca, MOR.
		M\'{e}xico.}	
	\email{fuen@matcuer.unam.mx}

\author{Mirna G\'omez-Morales}
	\address{Universidad Nacional Aut\'{o}noma de M\'{e}xico, 
		Unidad Cuernavaca. 
		A.P.273-3 C.P. 62251.
		Cuernavaca, MOR.
		M\'{e}xico.}	
	\email{m.gomez@im.unam.mx}

\author{Edwin Le\'{o}n-Cardenal}
	\address{CONACYT -- Centro de Investigaci\'{o}n en Matem\'{a}ticas, 
		Unidad Zacatecas.
		Quantum Ciudad del Conocimiento.
		Avenida Lasec, Andador Galileo Galilei,
		Manzana 3 Lote 7. C.P. 98160.
		Zacatecas, ZAC.
		M\'{e}xico.}
	\email{edwin.leon@cimat.mx}

\date{}

\subjclass[2000]{Primary 42B20 ; Secondary 14M25, 58K05, 14B05}

\keywords{Archimedean zeta functions, Newton polyhedra, non-degeneracy conditions, toric embeddings, oscillatory integrals.}
\begin{abstract}
	Let $f$ be a polynomial function over the complex numbers and let $\phi$ be a
	smooth function over $\C$ with compact support. When $f$ is non-degenerate with respect to its Newton polyhedron, we give an explicit list of candidate poles for the complex local zeta function attached to $f$ and $\phi$. The provided list is given just in terms of the normal vectors to the supporting hyperplanes of the Newton polyhedron attached to $f$. More precisely, our list does not contain the candidate poles coming from the additional vectors required in the regular conical subdivision of the first orthant, and necessary in the study of local zeta functions through resolution of singularities.
	
	Our results refine the corresponding results of Varchenko and generalize the results of Denef and Sargos in the real case, to the complex setting.
	
\end{abstract}
\maketitle

	\section{Introduction}\label{Sec1}
	
	Archimedean local zeta functions were introduced by Gel'fand and Shilov in the 50's in \cite{Gel-Shi}. 
	Take $K=\mathbb{R}$ or $\mathbb{C}$ and take $f(x)=f(x_1,\ldots,x_n)\in K[x_1,\ldots,x_n]$. Let $\phi$ be a
	smooth function with compact support in $K^n$. The local zeta function attached to $(f,\phi)$ is the parametric integral
	\[
	Z_{\phi}(s,f)=\int\limits_{K^{n}\smallsetminus f^{-1}(0)}\phi(x)\ |f(x)|_K^s\ |dx|,
	\]
	for $s\in\mathbb{C}$ with $\operatorname{Re}(s)>0$, where $|dx|$ is the Haar
	measure on $K^{n}$. For uniformity reasons we will use for $a\in\mathbb{C}$, the convention: $|a|_K=|a|_\mathbb{C}^2$, where $|a|_\mathbb{C}$ is the standard complex norm. 
	
	It is not difficult to show that $Z_{\phi}(s,f)$ converges on the half plane $\{s\in\mathbb{C}\ ;\  \operatorname{Re}(s)>0 \}$ and defines a holomorphic function there. Furthermore, $Z_{\phi}(s,f)$ admits a meromorphic continuation to the whole complex plane. This was proved by Bernstein and Gel'fand in \cite{Ber-Gel}, then independently by Atiyah in \cite{Ati}; both proofs make use of Hironaka's theorem on resolution of singularities. Later, Bernstein \cite{Ber} gave a proof by using what is called nowadays $D-$module theory. The main motivation of Gel'fand behind this problem was that the meromorphic continuation of $Z_{\phi}(s,f)$ implies the existence of fundamental solutions for differential operators with constant coefficients, see e.g. \cite[Section 5.5]{Igusa}.
	
	Since those days the theory of local zeta functions has evolved considerably due to multiple connections with many fields of mathematics, such as number theory, representation theory, and singularity theory among others. For instance, it is known that the poles of $Z_{\phi}(s,f)$ are integer shifts of the roots of the Bernstein--Sato polynomial of $f$, and therefore, by the classical results of Malgrange, the poles induce eigenvalues of the complex monodromy of $f$. These relations give some light about the difficult task that represents in general to compute the poles of $Z_{\phi}(s,f)$. However, there is a special case in which some of these invariants can be computed effectively: the non degenerated case. 
	
	Varchenko shows in \cite{Var} that when $f$ is assumed to be non degenerate with respect to its Newton polyhedron $NP(f)$, the set of candidate poles of $Z_{\phi}(s,f)$ can be described explicitly in terms of the normal vectors to the supporting hyperplanes of $NP(f)$. This description is very useful to study, for instance, real oscillatory integrals. Indeed Varchenko proves that the poles of the local zeta function control the asymptotic behavior of the oscillatory integrals associated to $f$ and $\phi$, see \cite[\textsection 1]{Var}.
	
	Roughly speaking Varchenko's idea is to attach a Newton polyhedron $NP(f)$ to the function $f$ and then define a non degeneracy condition with respect to $NP(f)$. Then one may construct a toric variety associated to the Newton polyhedron, and use the well known toric resolution of singularities to give a list of candidate poles for $Z_{\phi}(s,f)$. Toric resolution of singularities requires a  regular fan subordinated to $NP(f)$ (see Section \ref{Sec2} for the corresponding definitions) and it turns out that the extra rays required to obtain such a regular fan give rise to fake candidate poles, see Remark \ref{Refining fans}. In the case $K=\mathbb{R}$, Denef and Sargos proved in \cite{Den-Sar} that the poles coming from those extra rays can be discarded, thus reducing the  list of candidate poles. In \cite{Le-Ve-Zu} this proof is extended for real zeta functions of analytic mappings. In this work we prove the analogue of the results of Denef and Sargos for the case $K=\mathbb{C}$, thus providing a much shorter list of candidate poles that can be read off directly from the geometry of the Newton polyhedron of $f$. 
	
	The method employed by Denef and Sargos relies heavily on the particular structure of the real field and at this point we do not know if their methods can be extended to the complex case. Here we propose a different approach that is based on toric embeddings as in \cite{Ar-GM-Sha}, rather than toric resolution of singularities. The toric embeddings that we obtain are in general multi--valued maps, i.e. coverings (see Section \ref{Momomial transf}), so we use them as change of variables for coverings (see Lemma \ref{Cambio de variables para recubrimientos}) in order to analyze the integral that appears as the pull-back of $Z_{\phi}(s,f)$ to the toric variety. Then we are left with some monomial integrals for which the poles are easily described, see Lemma \ref{Monomial Integrals}. The main result of this article is Theorem \ref{Main Thm}, which says that when $f$ is non-degenerate with respect to $NP(f)$, and $\phi$ has a small enough support, then the poles of the complex zeta function $Z_{\phi}(s,f)$ are contained in certain arithmetic progressions that are given just in terms of the faces of $NP(f)$. Moreover, Theorem \ref{Main Thm} can be adapted in a straightforward manner to reprove  Th\'eor\`eme 1.1, Th\'eor\`eme 1.2 and Th\'eor\`eme 6.1 in \cite{Den-Sar}.
	
	A natural task that remains to be undertaken is the generalization of our results to the case when $f$ is a holomorphic function. The main difficulty that we have encountered in that case is the generalization of the `transversality' condition of Section \ref{Section transversality} for holomorphic $f$, see Remark \ref{Generalization holomorphic case}.
	
	We would like to emphasize that our approach to the study of complex zeta functions avoids the use of a toric resolution of singularities, thus the relevant information about the poles of $Z_\phi(s,f)$ relies just in the geometry of the Newton polyhedron of $f$, more precisely in the geometry of its normal fan (see Remark \ref{Refining fans}). This point of view is on the same line that the work of Gilula in \cite{Gil} for real oscillatory integrals. Perhaps our methods, combined with those of \cite{Gil} may give a better estimation and/or asymptotic expansions of complex oscillatory integrals like the ones studied in \cite{Vasi}. The subject of real  oscillatory integrals constitutes a very active area of research, see (among many others) \cite{Ber-Gel,Ch-Ka-No,Co-Gr-Pr,Den-Sar,Gel-Shi,Gil,Gi-Gr-Xi,Gree,Gre,Igusa,Ik-Mu,Ka-No2,Ka-No1,Le-Ve-Zu,Ph-Ste-Stu,Var,Vasi,Ve-Zu}. We believe that our results may be of some interest in this community. Note also that the kind of problems that we address here is also an object of study for topological zeta functions and its relatives, see for example \cite{Es-Le-Ta,Le-vPr}.
	
	The authors would like to thank Hussein Mortada for providing us with the reference of Theorem 3.4.11 in \cite{Cox-Lit-Sch}. 
	We are also grateful to Wilson Z\'u\~niga-Galindo for many fruitful conversations during the preparation of this work. Some of the ideas behind this work were developed during a research visit of the second and third named authors at Universidad Aut\'onoma de Aguascalientes, we want to express our gratitude for their hospitality. Lastly, we would like to thank the anonymous referees for useful suggestions and comments, specially for the observation that appears in Remark \ref{Weakening the non degeneracy condition}.
	
\section{Fans, Monomial transformations and Toric Varieties}\label{Sec2}
In this section, we review some basic results about toric geometry, such as cones, fans and toric varieties. The material presented in this section can be found for instance in \cite{Ewald,Fulton,Oka}. 

\subsection{Cones and polyhedral fans.}
The rational polyhedral cone generated by $u^{(1)},\ldots , u^{(k)}\in \Z^n$ is the set $\langle u^{(1)},\dots,u^{(k)}\rangle =\{t_1 u^{(1)}+\cdots+ t_k u^{(k)}; \; t_i\in \R_{\geq 0},\; i=1, \ldots, k\}\subset \R^n$. By ${\mathcal L} (\sigma)$ we will denote the  minimal linear subspace  containing the cone $\sigma$. The \textit{dimension} of $\sigma$, denoted by $\dim(\sigma)$, is the dimension of ${\mathcal L} (\sigma)$ and  the \textit{relative interior} of $\sigma$, denoted by $Int_{rel} \sigma$, is the interior of $\sigma$ as a subset of ${\mathcal L} (\sigma )$. 

A rational polyhedral cone is said to be \textit{strongly convex} if it does not contain any non-trivial linear subspace.  Note that a cone contained in the first orthant is strongly convex. 

A set of generators $\{u^{(1)},\dots,u^{(n)}\}$ of a rational cone $\langle u^{(1)},\ldots, u^{(k)}\rangle$ can be chosen to be primitive, i.e. such that for any $i$, $\gcd_j(u_j^{(i)})=1$. If, furthermore, the set $\{u^{(1)}, \ldots, u^{(k)}\}$ is minimal it would be called the set of \textit{vertices} of the cone $\langle u^{(1)},\ldots, u^{(k)}\rangle$. For a strongly convex cone, its set of vertices is uniquely determined. We will denote by $\sigma=\Cone(u^{(1)}, \ldots, u^{(k)})\subset \R^n$ the rational convex cone with vertices $u^{(1)}, \ldots, u^{(k)}\in\Z^n$; or, for simplicity $\sigma=\Cone(\N)$, where $\N$ is the $k\times n$ matrix having the vertices $u^{(1)}, \ldots, u^{(k)}$ of $\sigma$ as columns. A strongly convex rational polyhedral cone $\sigma=\Cone(u^{(1)}, \ldots, u^{(k)})$ is said to be \textit{simplicial} if the group ${\mathcal L} (\sigma )\cap \Z^n$ has rank $k$ and, furthermore, \textit{regular} if ${\mathcal L} (\sigma )\cap \Z^n$ is generated by the vertices of $\sigma$.

The \textit{dual} $\sigma^{\vee}$ of a cone $\sigma$ is the polyhedral cone given by $\sigma^{\vee}:= \{ v \in \R^n ; v \cdot u \geq 0,\, \text{for all}\, u\in \sigma \}$, where $u\cdot v$ stands for the inner product of the vectors $u=(u_1,\ldots ,u_n)$ and $v=(v_1,\ldots ,v_n)$. Note that given an $n$-dimensional rational simplicial cone $\sigma=\Cone(\N)$ there is a matrix $\M\in \mathcal{M}(n,\Z)$ with columns $v^{(1)}, \ldots, v^{(n)}\in\Z^n$, such that 
\begin{equation} \label{gensdeldualdeunracional}
{\sigma}^{\vee}= \Cone( v^{(1)}, \ldots, v^{(n)}) \;\; \mbox{and} \; \,\M\N\trans=\begin{pmatrix}
\lambda_1 & 0 & \ldots &0\\
0 & \lambda_2&\ddots  &0\\
\vdots & \ddots&\ddots  &\ddots\\
0  & 0&\dots & \lambda_n 
\end{pmatrix}= \N \trans \M, 
\end{equation} 
with positive integers $\lambda_1, \dots, \lambda_n$. Here $\N\trans$ stands for the transpose of the matrix $\N$. 
Set $L_{\M}$ to be the linear map 
\begin{align*}
L_{\M} : \R^n &\to \R^n\\
x &\mapsto \M x.
\end{align*}
The following properties of $L_{\M}$ are straightforward.
\begin{lemma} \label{L sub M del dual es el primer ortante}
	\begin{enumerate}[(i)]
		\item $L_{\M}\left( {(\R_{\geq 0})}^n\right)=\Cone (\M )$.\\
		\item $L_{\N\trans}(\sigma^{\vee}\cap\Z^n)=\{\N\trans \cdot \beta\ ; \ \beta\in \sigma^{\vee}\cap\Z^n\}\subset (\Z_{\geq 0})^n.$\\
		\item  If $\M, \N\in  \mathcal{M}(n,\Z)$ satisfy $\N\trans \M=\Lambda$, for some diagonal matrix $\Lambda$ with values $\lambda_1, \dots, \lambda_n\in\R_{>0}$ along its diagonal, then $L_{\N\trans}\left(\Cone \left( M\right)\right)=\Rnnoneg$.
	\end{enumerate}
\end{lemma}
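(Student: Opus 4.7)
The plan is to verify each of the three parts by unwinding the relevant definitions; no delicate estimates or deep results are required. The three statements are all book-keeping assertions about how the linear maps $L_{\M}$ and $L_{\N\trans}$ interact with cones built from their columns and rows.

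For (i), I would simply write a generic point $x = (x_1,\dots,x_n) \in \Rnnoneg$ and expand $L_{\M}(x) = \M x = \sum_{i=1}^n x_i v^{(i)}$, where $v^{(1)},\dots,v^{(n)}$ denote the columns of $\M$. As $x$ ranges over the first orthant, the coefficients $x_i$ independently sweep out all of $\R_{\geq 0}$, so the image is precisely the set of non-negative combinations of the $v^{(i)}$, which by definition is $\Cone(\M)$.

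For (ii), the equality $L_{\N\trans}(\sigma^{\vee}\cap\Z^n) = \{\N\trans \beta : \beta \in \sigma^\vee \cap \Z^n\}$ holds tautologically from the definition of $L_{\N\trans}$. The content of the statement is the containment in $\Znnoneg$: the $i$th coordinate of $\N\trans \beta$ equals $u^{(i)} \cdot \beta$, where $u^{(i)}$ is the $i$th row of $\N\trans$, i.e.\ the $i$th vertex of $\sigma$. Since $\beta \in \sigma^\vee$ and $u^{(i)} \in \sigma$, this inner product is $\geq 0$ by the very definition of the dual cone, while integrality follows from $\N$ and $\beta$ having integer entries.

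For (iii), I would combine (i) with the diagonal hypothesis. By (i), $\Cone(\M) = L_{\M}(\Rnnoneg)$, and composing linear maps gives
\[
L_{\N\trans}(\Cone(\M)) = L_{\N\trans}\bigl(L_{\M}(\Rnnoneg)\bigr) = L_{\N\trans \M}(\Rnnoneg) = L_{\Lambda}(\Rnnoneg).
\]
Since $\Lambda$ is diagonal with strictly positive entries $\lambda_1,\dots,\lambda_n$, the map $L_{\Lambda}$ is a coordinate-wise positive rescaling and hence a bijection of $\Rnnoneg$ onto itself, which yields the claim. The only step with any substance is the non-negativity observation in (ii), but it is immediate once the rows of $\N\trans$ are identified with the vertices of $\sigma$, so I do not anticipate any genuine obstacle in this proof.
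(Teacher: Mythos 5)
Your proof is correct; the paper offers no argument for this lemma (it simply declares the properties straightforward), and your unwinding of the definitions — columns of $\M$ generating $\Cone(\M)$, rows of $\N\trans$ being vertices of $\sigma$ paired against $\beta\in\sigma^\vee$, and $L_{\N\trans}\circ L_{\M}=L_{\Lambda}$ with $L_\Lambda$ a positive coordinatewise rescaling — is exactly the intended verification. No gaps.
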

There is a simple test to known when a given $n-$dimensional cone $\sigma=\Cone(N)$ is regular or not: $\sigma$ is regular if and only if $\N\in GL (n,\Z)$ is an unimodular matrix. In this case \eqref{gensdeldualdeunracional} holds for $\M=(\N^{-1})\trans$ and $\lambda_1=\cdots=\lambda_n=1$. 

A finite collection $\Sigma=\{\sigma\}_{\sigma\subset \R^n}$ of cones is called a \textit{polyhedral fan} if every face of a cone in $\Sigma$ is a cone in $\Sigma$, and the intersection of any two cones $\sigma, \tau \in \Sigma$ is a face of both $\sigma$ and $\tau$. The \textit{support} $|\Sigma|$ of a polyhedral fan $\Sigma$ is the union of its cones. A polyhedral fan is said to be \textit{simplicial} if all of its cones are simplicial and \textit{regular} if all of its cones are regular. In a simplicial polyhedral fan $\Sigma$, a cone $\tau$ is a face of another cone $\sigma$ if and only if the set of vertices of $\tau$ is a subset of the set of vertices of $\sigma$. For a polyhedral fan $\Sigma$, its \textit{set of vertices}, denoted $\Ver(\Sigma)$, is the union of vertices of cones in $\Sigma$. 

\begin{rmk}\label{Refining fans}
	Given a polyhedral fan $\Sigma$, it is always possible to find a simplicial (respectively, regular) fan $\Sigma^\prime$ refining $\Sigma$, that is, such that every cone in $\Sigma$ is union of cones in $\Sigma^\prime$, and with the property that every cone in $\Sigma^\prime$ is simplicial (respectively, regular) (see for example \cite{Fulton}). The procedure of finding a simplicial fan refining $\Sigma$ can be done without changing the set of vertices in $\Sigma$, but obtaining a regular fan  $\Sigma^\prime$ refining $\Sigma$ would require, in general, the introduction of new vertices. 
\end{rmk}

\subsection{Monomial transformations defined for integer matrices.}\label{Momomial transf}
Given a non-singular matrix $\M\in \mathcal{M}(n,\Z)$ with columns $u^{(1)},u^{(2)},\ldots,u^{(n)}\in \Z^n$, denote by $\psi_{\M}$ the 
morphism given by 
\begin{align*}
\psi_{\M} : (\C^{*})^n & \longrightarrow  (\C^{*})^n\\
z& \longmapsto \zeta,
\end{align*}
where $\C^*:=\C\setminus\{ 0\}$ and $\zeta_k=\prod_{j=1}^{n}z_j^{u_j^{(k)}}.$ This morphism is rational on $\C^n$ and it is bi-rational if $\det M=\pm1$. When $\det M=\pm d$, the morphism $\psi_{\M}$ is a $d$-fold covering, see e.g. \cite{Oka}. Furthermore, for any matrices $\M, \N\in \mathcal{M}(n,\Z)$ we have
\[\psi_{\M} \circ \psi_{N} = \psi_{{N}M }; \;  \text{ while for }  
\M\in GL(n,\Z)\;  \text{ we have } \;(\psi_{\M} )^{-1} = \psi_{\M^{-1}}.
\]

\subsection{The toric variety associated to a simplicial polyhedral fan.}\label{T Sigma}

Let us recall the construction of the toric variety associated to a simplicial polyhedral fan $\Sigma$ of $\mathbb{R}^n$
with support in the first orthant. This process can be found, for example in \cite{Ewald}. Denote by $\Max (\Sigma)$ the set of maximal dimension cones in $\Sigma$, i.e. $n-$dimensional cones. Take a simplicial cone $\sigma=\Cone(N)\in\Max (\Sigma)$, with $\sigma^{\vee}=\Cone(v^{(1)}, \ldots, v^{(n)})$ satisfying \eqref{gensdeldualdeunracional}. Since the first orthant is contained in $\,\sigma^{\vee}$, we have that the canonical basis of $\mathbb{R}^n:\ \{e^{(1)}, \ldots, e^{(n)}\},$ is contained in $\sigma^{\vee}\cap \Z^n$. Choose $w^{(1)}, \ldots, w^{(\ell)}\in\Z^n$ so that the set $\{v^{(1)}, \ldots, v^{(n)}$, $e^{(1)}, \ldots, e^{(n)}$, $w^{(1)}, \ldots, w^{(\ell)}\}\subset\Z^n$ generates the semi-ring $\,\sigma^{\vee}\cap \Z^n$. Consider the morphism 
\begin{align*}
\varphi_{\sigma}: (\C^{*})^n &\to (\C^{*})^n\times(\C^{*})^n\times (\C^{*})^\ell\subset \C^{2n+\ell}\\\label{phi-sigma}
y & \mapsto (y^{v^{(1)}},\ldots,y^{v^{(n)}}, y^{e^{(1)}},\dots,y^{e^{(n)}}, y^{w^{(1)}},\dots,y^{w^{(\ell)}}).
\end{align*}

Set $U_{\sigma}:=\textrm{Image} (\varphi_{\sigma})$. The closure $T_{\sigma}:=\overline{U_{\sigma}}\subset \C^{2n+\ell}$ is the affine variety associated to the cone $\sigma$. Using local coordinates $(x,y,z)$ in $\C^n\times\C^n\times \C^\ell$, we have:
\begin{equation*}
T_{\sigma}=\left\{(x,y,z)\in \C^n\times\C^n\times \C^\ell;(x,y,z)^{(\alpha,\beta,\gamma)}=(x,y,z)^{(\alpha^{\prime},\beta^{\prime},\gamma^{\prime})}\right\},
\end{equation*}
where $\alpha, \alpha^{\prime}, \beta, \beta^{\prime}\in (\Z_{\geq 0})^n; \gamma,\gamma^{\prime}\in (\Z_{\geq 0})^\ell$ and they satisfy  \[\sum^n_{i=1} (\alpha_i -\alpha^{\prime}_i)v^{(i)}+\sum^n_{j=1} (\beta_j-\beta^{\prime}_j)e^{(j)} +\sum^\ell_{k=1} (\gamma_k-\gamma^{\prime}_k) w^{(k)}=0.\]

The toric variety $T_{\Sigma}$ associated to $\Sigma$ is constructed by glueing the affine toric varieties associated to each of its cones in such a way that $\textrm{Image} (\varphi_{\sigma})= \textrm{Image}(\varphi_{\sigma^{'}})$ for $\sigma,\sigma^{'}\in \Sigma$. This construction of affine toric varieties is well known, furthermore one may show that $T_{\Sigma}$ is a Hausdorff variety. 

The following lemma will be used in the proof of our main result. Its proof is a straightforward calculation.

\begin{lemma}\label{Prop:JacdepsiA}
	Let $A\trans\in \mathcal{M}(n,\Z)$ be the matrix with columns $u^{(1)},\ldots,u^{(n)}$, where $u^{(i)} =(u^{(i)}_1, \dots, u^{(i)}_n)\trans\in (\Z_{\geq 0})^n$.
	Then the Jacobian of the morphism $\psi_{A}$ at $x$ is given by
	\begin{equation*}
	\det \,D (\psi_{A})_x=\det A \cdot x_1^{||u^{(1)}||-1}\cdots x_n^{||u^{(n)}||-1},
	\end{equation*}
	where $||u^{(j)}||:=u^{(j)}_1+\cdots +u^{(j)}_n$. 
\end{lemma}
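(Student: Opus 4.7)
The proof will be a direct computation, since the lemma is essentially a matrix-factorization identity. The plan is as follows.

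First I would write out the components of $\psi_A$ explicitly. With $A^{\mathbf{t}}$ having columns $u^{(1)},\ldots,u^{(n)}$, the rows of $A$ are $u^{(1)\mathbf{t}},\ldots,u^{(n)\mathbf{t}}$, so the $k$-th component of $\psi_A(x)$ is the monomial $\xi_k := \prod_{j=1}^n x_j^{A_{jk}}$, where $A_{jk}=u^{(j)}_k$. A routine partial derivative gives
\[
\frac{\partial \xi_k}{\partial x_i}\;=\;\frac{A_{ik}\,\xi_k}{x_i},
\]
so the Jacobian matrix satisfies $(D\psi_A)_{ki} = \xi_k \, A_{ik} \, x_i^{-1}$.

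Next I would recognize this as a triple product of matrices, factoring the dependence on $k$ out of the rows and the dependence on $i$ out of the columns. Explicitly,
\[
D(\psi_A)_x \;=\; \mathrm{diag}(\xi_1,\ldots,\xi_n)\cdot A^{\mathbf{t}}\cdot \mathrm{diag}(x_1^{-1},\ldots,x_n^{-1}).
\]
Taking determinants and using $\det A^{\mathbf{t}} = \det A$, one gets
\[
\det D(\psi_A)_x \;=\; \det A \cdot \frac{\xi_1\cdots \xi_n}{x_1\cdots x_n}.
\]

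Finally I would simplify the monomial factor. We have
\[
\xi_1\cdots\xi_n \;=\; \prod_{k=1}^n\prod_{j=1}^n x_j^{A_{jk}} \;=\; \prod_{j=1}^n x_j^{\sum_k A_{jk}} \;=\; \prod_{j=1}^n x_j^{\|u^{(j)}\|},
\]
since $\sum_{k=1}^n A_{jk}=\sum_{k=1}^n u^{(j)}_k = \|u^{(j)}\|$. Substituting this into the previous display yields exactly
\[
\det D(\psi_A)_x \;=\; \det A \cdot x_1^{\|u^{(1)}\|-1}\cdots x_n^{\|u^{(n)}\|-1},
\]
which is the stated formula. There is no real obstacle here; the only thing one has to be careful about is keeping the indexing of $A$ versus $A^{\mathbf{t}}$ straight, which is why I would package the calculation as the diagonal–$A^{\mathbf{t}}$–diagonal factorization above rather than expanding the determinant by hand.
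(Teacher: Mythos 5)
Your computation is correct, and it is exactly the ``straightforward calculation'' the paper alludes to without writing out: the factorization $D(\psi_A)_x=\mathrm{diag}(\xi)\cdot A\trans\cdot\mathrm{diag}(x^{-1})$ together with $\xi_1\cdots\xi_n=\prod_j x_j^{\|u^{(j)}\|}$ gives the stated formula, and your indexing of $A$ versus $A\trans$ agrees with the paper's convention for $\psi_{\M}$.
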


\subsection{Toric modification}
In this section we will introduce some morphisms defined over $T_\sigma$ (or more generally over $T_\Sigma$) that will be used in the computation of $Z_\phi(s,f)$. 
For each $\sigma\in\Max (\Sigma)$, set $\pi_{\sigma}$ to be the morphism 
\begin{align*}
\pi_{\sigma} : T_{\sigma} \subset \C^n\times\C^n\times \C^\ell &\longrightarrow \C^n\\
(x,y,z)\quad & \longmapsto y.
\end{align*}

\noindent
It can be verified that the morphisms $\pi_{\sigma}$ are compatible with the glueing, defining a morphism $\displaystyle \pi:T_{\Sigma}\to \C^n$ such that for each $\sigma\in\Max (\Sigma)$, $\displaystyle \pi|_{T_{\sigma}}= \pi_{\sigma}$. Since the support of $\Sigma$ is the first orthant, the morphism 
$\displaystyle \pi:T_{\Sigma}\to \C^n$ is a proper and bi-rational morphism (see for example \cite[Thm. 3.4.11]{Cox-Lit-Sch}) called the \textit{toric modification} associated to $\Sigma$. This morphism $\pi$ is bi-regular in the complement of the coordinate hyperplanes.

Take $\sigma = Cone (\N)\in\Max (\Sigma)$. Let  $\M$ be the matrix such that $\sigma^\vee =\Cone (\M)$ and let $\Lambda$ be the diagonal matrix $\N\trans \M$. Let $v^{(1)}, \ldots, v^{(n)}$ be the columns of $\M$, let $w^{(1)}, \ldots, w^{(\ell)}\in\Z^n$ be such that the set 
\[\{v^{(1)}, \ldots, v^{(n)}, e^{(1)}, \ldots, e^{(n)}, w^{(1)}, \ldots, w^{(\ell)}\}\subset\Z^n\]
generates the semi-ring $\,\sigma^{\vee}\cap \Z^n$, and let $W$ be the $(\ell\times n)-$matrix that has $w^{(1)}, \ldots, w^{(\ell)}$ as columns.  

Now, consider the morphism
\begin{align*}
\varphi_{\sigma}: (\C^{*})^n &\longrightarrow \C^{2n+\ell}\\
y & \longmapsto (y^{v^{(1)}},\ldots,y^{v^{(n)}}, y^{e^{(1)}},\dots,y^{e^{(n)}}, y^{w^{(1)}},\dots,y^{w^{(\ell)}}),
\end{align*}
and the projections $\rho_x:\C^{2n+\ell}\to \C^n$ and $\rho_y:\C^{2n+\ell}\to \C^n$ defined in local coordinates $(x,y,z)\in\C^n\times\C^n\times \C^\ell$ by $\rho_x(x,y,z)=x\in \C^n$ and $\rho_y(x,y,z)=y\in \C^n$. 

The diagram 

\begin{large}
	\begin{equation}\label{Fig:DiagramCompleto}
	\xymatrix{
		&& &\; \;\C^n \ar[rr]^f  
		&&\C\\
		&& & & &\\
		(\C^{*})^n \ar@{^{(}->}[rr]^{\varphi_{\sigma}} \ar@/^1pc/@{^{(}->}[rrruu]^{\textrm{Id}\; \quad} 
		\ar@/_1pc/@{^{(}->}[rrrdd]^{\psi_{\M}\; \quad} &
		&T_{\sigma}\subset \C^{2n+\ell} \;\ar@/_/[rdd]_{\rho_x} \ar@/_0.6pc/[ruu]^{\rho_y}&\quad \C^n \ar[dd]^{\psi_{\Lambda}} \ar[uu]_{\psi_{\N\trans}}& (\C^{*})^n \ar@/^/[ldd]^{\psi_{\M}} \ar@/_0.5pc/@{_{(}->}[luu]_{\;\textrm{Id}}\\ 
		& & & &\\
		& & &\C^n  & &
	}
	\end{equation}
\end{large}

\noindent
commutes. In particular, the restriction $\pi_{\sigma}:=\pi|_{T_{\sigma}}={\rho_y}|_{T_{\sigma}}$ of the toric modification associated to $\Sigma$ satisfies the equalities $\rho_x\circ 
\varphi_{\sigma}=\psi_{\M}$ and $\pi_{\sigma}\circ 
\varphi_{\sigma}=\Id|_{(\C^\ast)^n}$. 

\begin{defi}\label{La parametrizacion de la carta asociada a sigma}
	Take $\sigma = Cone (\N)\in\Max (\Sigma)$. Let  $\M$ be the matrix such that $\sigma^\vee =\Cone (\M)$ and let $\Lambda$ be the diagonal matrix $\N\trans \M$. 
	Let $\Phi_\sigma$ be given by
	\begin{align*}
	\C^n &\longrightarrow T_\sigma\\
	x & \longmapsto (\psi_\Lambda (x), \psi_{\N\trans}(x), \psi_{{\N\trans} W} (x)).
	\end{align*}
\end{defi}
The following diagram illustrates the role of $\Phi_\sigma$ in the precedent construction of morphisms,
\begin{large}
	\begin{equation}\label{Diagrama de covering}
	\xymatrix{
		&& &\; \;\C^n \ar[rr]^f  
		&&\C\\
		&& & & &\\
		& 
		&T_{\sigma} \;\ar@/_/[rdd]_{\rho_x}  \ar@/_0.6pc/[ruu]^{\pi_\sigma}&\ar@/_/[l]_{\Phi_\sigma}\C^n  \ar[dd]^{\psi_{\Lambda}} \ar[uu]_{\psi_{\N\trans}}& \\ 
		& & & &\\
		& & &\C^n  & &
	}
	\end{equation}
\end{large}
Note that $\Phi_\sigma|_{(\C^\ast)^n}=\varphi_{\sigma}\circ\psi_{\N\trans}$ and all the exponents appearing here are positive so we can extend this map to the whole $\C^n$. This shows that $\Phi_\sigma$ is well defined. Also we have that $\Phi_\sigma : {(\C^\ast)}^n\longrightarrow T_\sigma^\ast$ is a finite covering, moreover, it is a local diffeomorphism. 
\subsection{Partitions of Unity}\label{Particiones de unidad}
We will use a partial resolution of singularities to decompose the integral $Z_\phi(s,f)$ into monomial integrals. A key ingredient in our proof is the Theorem of existence of partitions of the unity.

Recall that, for any topological space $X$, given a function $\chi: X\longrightarrow \C$, the \emph{support of} $\chi$ is the closed set
\[
Supp (\chi):= \overline{\{x\in X; \chi(x)\neq 0\}}.
\]
\begin{defi}\label{definicion de C infinito}
	Let $T_\Sigma$ be a toric variety, and consider $U$ a connected open subset of $T_\Sigma$. Let $\chi :U\longrightarrow \C$ be a function and take a point $x$ in $U$. Take $\sigma\in \Max (\Sigma)$ such that $x\in T_\sigma\subset \C^{2n+\ell}$. We will say that {\bf $\chi$ is $\mathcal{C}^\infty$ at $x$} if there exists $V\subset \C^{2n+\ell}$ a neighbourhood of $x$ in $\C^{2n+\ell}$, and a $\mathcal{C}^\infty-$function $\Psi: V\longrightarrow \C$ such that $\Psi|_{V\cap U}=\chi|_{V\cap U}$.
\end{defi}
The fact that the definition does not depend on the affine chart $T_\sigma$ follows from the fact that the glueings defining $T_\Sigma$ are regular morphisms.

\begin{defi}
	Let $X$ be a topological space and let $\{ U_\alpha\}_{\alpha\in\Lambda}$ be a collection of open sets with $X=\bigcup_{\alpha\in\Lambda}U_\alpha$. {\bf A partition of unity subordinated to the cover  $\{ U_\alpha\}_{\alpha\in\Lambda}$ } is a collection of continuous functions $\xi_\alpha : X\longrightarrow \C$, such that $\sum_{\alpha\in\Lambda} \xi_\alpha (x) =1$ and the support of each $\xi_\alpha$ is contained in $U_\alpha$.
\end{defi}
It is well known that partitions of unity subject to a covering exist for the complex $n$-dimensional space. Moreover, the partitions can be chosen such that the $\xi_\alpha$'s are $\mathcal{C}^\infty$. Then, there exist $\mathcal{C}^\infty-$partitions of unity in the sense of Definition \ref{definicion de C infinito} for toric varieties.

	\section{Newton Polyhedra}\label{Seccion NP and non deg}
\subsection{The Newton polyhedron of a function}\label{Subsec:DualfanMirna}
In this section we recall the construction and some properties of the Newton polyhedron of a polynomial and the corresponding toric variety associated to its normal fan. Most of the statements in this section may also be found in \cite{Ar-GM-Sha}.

In this section we will take $K=\mathbb{R}$ or $\mathbb{C}$. Let $f$ be a $K$-polynomial function, such that $f(\underline{0})=0$. We write 
\begin{equation}\label{escritura del polinomio f}
f(x)=\sum_{\mu\in (\Z_{\geq 0})^n} 	a_{\mu}x^{\mu}\quad\text{for}\quad x=(x_1,\ldots,x_n)\in K^n.
\end{equation} 
The \textit{support} or \textit{set of exponents} of $f$ is the set 
\[\varepsilon (f):= \{\mu\in {(\Z_{\geq 0})^n};  a_{\mu}\neq 0\}.\]
The \textit{restriction} of $f$ to a subset  $F\subset (\R_{\geq 0})^n$ is defined as
\[f|_F=\sum_{\mu\in \varepsilon (f)\cap F} a_{\mu}x^{\mu}.\]
\begin{example}\label{Restringir a esta cara es evaluar en 0}
	Take $\{e^{(1)},\ldots,e^{(k)}\}$ from the canonical basis of $\R^n$ for $1\leq k \leq n$. If $\langle e^{(1)},\ldots,e^{(k)} \rangle$  denotes the cone generated by these vectors, then 
	\[f|_{\left\langle e^{(1)},\dots ,e^{(k)}\right\rangle} = f(x_1,\ldots ,x_k,0,\ldots ,0).\] Therefore $f(x)=\sum_{\mu\in (\Z_{\geq 0})^n} 	a_{\mu}x^{\mu}$	can be written as
	\[	 f= f|_{\left\langle e^{(1)},\dots ,e^{(k)}\right\rangle} + \sum_{i=k+1}^n x_i \widetilde{f_i},\] 
	for some $K$-polynomial functions $\widetilde{f}_{k+1},\dots, \widetilde{f}_n$.
\end{example}
\begin{defi}
	The \textit{Newton polyhedron} of $f$ is  the following convex hull 
	\begin{equation*}
	NP(f):=Conv(\{\mu + (\R_{\geq 0})^n ; \mu \in \varepsilon (f)\} ) \subset (\R_{\geq 0})^n.
	\end{equation*}
\end{defi}
\begin{rmk}\label{Si solo tiene un vertice es un monomio por una unidad}
	The polyhedron $NP(f)$ has only one vertex $\vertice\in (\Z_{\geq 0})^n$ if and only if $f(x)=x^{\vertice}h_{\vertice}(x)$ where $h_{\vertice}(x)\in K [[x_1,\ldots ,x_n]]$ satisfies $h_{\vertice}(\underline{0})\neq 0$. 
\end{rmk}

Let $\mathcal{H}$ be the hyperplane given by $\left\{  x\in\mathbb{R}^{n}\  ;\  a\cdot x =b\right\}  $. The hyperplane $\mathcal{H}$ determines two closed half-spaces:
\[
\mathcal{H}^{+}:=\left\{  x\in\mathbb{R}^{n}\mathbf{;}\, a\cdot x\geq b\right\}
\quad\text{and}\quad
\mathcal{H}^{-}:=\left\{  x\in\mathbb{R}^{n}\mathbf{;} \,a\cdot x\leq b\right\}  .
\]
We say that $\mathcal{H}$ is \textit{a supporting hyperplane} of $NP(f)$, if $NP(f)\cap \mathcal{H}\neq\emptyset$ and $NP(f)\subset \mathcal{H}^+$ or $NP(f)\subset \mathcal{H}^-$.

A \textit{proper face} of $NP(f)$ is the intersection of the polyhedron with a supporting hyperplane, and the \textit{non-proper} face is the whole $NP(f)$. Faces of dimension $0, 1$, and $n-1$ are called vertices, edges and facets, respectively. 

When $F$ is a face of $NP(f)$, the restriction $ f|_F$ is often denoted by $f_F$ and called the \textit{face function}.

Given a supporting hyperplane $\mathcal{H}$ of $NP(f)$ containing a facet, there exists a vector $u\in \mathbb{Z}^n\setminus\{0\}$ which is orthogonal to $\mathcal{H}$ and is directed into the polyhedron. Such a vector is called an \textit{inward} normal to $\mathcal{H}$. When the vector $u$ is chosen to be primitive, it turns out that every facet of $NP(f)$ has a unique primitive inward vector; the set of such vectors is denoted by  $\mathcal{I}(NP(f))$.

Now, given $\omega\in\Rnnoneg$, the \textit{$\omega$-order} of  $f$ is defined as
\[ \nu_{\omega}(f):=min\{\omega \cdot \mu ; \mu\in \varepsilon(f)\}.\]
Note that $\mathcal{H}_\omega:=\{ x\in\R^n\ ;\ \omega\cdot x =\nu_\omega(f)\}$ is a \textit{supporting hyperplane} for $NP(f)$, and the intersection 
\[F_{\omega} := NP(f)\cap \mathcal{H}_\omega
\] is a face of $NP(f)$ called \textit{the first meet locus} of $\omega$ or the \textit{$\omega$-face}.

\subsection{Dual fans and fans subordinated to $f$.}
We define an equivalence relation on $\Rnnoneg$ by taking
\[\omega\sim \omega^{\prime}\Longleftrightarrow F_\omega=F_{\omega^{\prime}}.\]
In order to describe the equivalence classes of $\sim$ we define \textit{the cone
	associated to} a given face $F$ of $NP(f)$, as \[\sigma_F := \{\omega \in  (\R_{\geq 0})^n\ ;\ F = F_\omega\}.\]
Note that $\sigma_{NP(f)}=\{\underline{0}\}$. The other equivalence classes are described in the following Lemma. 

\begin{lemma}\label{DualFan}\begin{enumerate}[(i)]
		\item Let $F$ be a proper face of $NP(f)$, then
		the topological closure $\overline{\sigma}_{F}$ of
		$\sigma_{F}$ is a rational polyhedral cone and
		\[\overline{\sigma}_{F}=\left\{ \omega \in  (\R_{\geq 0})^n\ ;\ F_\omega \supset F\right\}.\]
		\item Let $F_{1},\ldots,F_{k}$ be the facets of $NP(f)$ containing
		$F$ and let $u^{(1)},\ldots,u^{(k)}\in\mathbb{Z}^{n}\setminus\left\{
		\underline{0}\right\}$ be the inward normal vectors to $F_{1},\ldots,F_{k}$ respectively. Then
		$\sigma_{F}=\left\{\lambda_{1}u^{(1)}+\cdots+\lambda_ku^{(k)}\ ;\ \lambda_{i}\in\mathbb{R}_{>0}\right\}$,
		and  $\overline{\sigma}_{F}=\langle u^{(1)},\dots,u^{(k)}\rangle.$
		\item $\dim\sigma_{F}=\dim\overline{\sigma}_{F}=n-\dim F$.
	\end{enumerate}
\end{lemma}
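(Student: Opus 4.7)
The plan is to prove the three assertions in order, using the fact that each condition $F_\omega = F'$ (or $F_\omega \supset F'$) can be rewritten as a finite collection of linear inequalities in $\omega$. I will work throughout with the vertices of $NP(f)$, which I denote $\{\mu^{(1)}, \ldots, \mu^{(r)}\}$; by the general theory of polyhedra every face $F$ is the convex hull of the subset $V(F) \subset \{\mu^{(j)}\}$ of vertices it contains, and $F_\omega$ is the convex hull of those $\mu^{(j)}$ that minimize $\omega \cdot \mu^{(j)}$.

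For (i), I first note that $F_\omega \supset F$ is equivalent to the finite set of conditions $\omega \cdot \mu^{(j)} = \nu_\omega(f)$ for every $\mu^{(j)} \in V(F)$, or equivalently $\omega \cdot (\mu^{(j)} - \mu^{(i)}) \leq 0$ for all $\mu^{(j)} \in V(F)$ and all $\mu^{(i)} \in V(NP(f))$. This exhibits $C_F := \{\omega \in \Rnnoneg : F_\omega \supset F\}$ as the intersection of finitely many rational closed half-spaces, hence as a rational polyhedral cone, and in particular as a closed set. Since $\sigma_F \subset C_F$ trivially, we get $\overline{\sigma}_F \subset C_F$. For the reverse inclusion, given $\omega_0 \in C_F$ I would pick any $\omega_1 \in \sigma_F$ (such $\omega_1$ exists: take $\omega_1$ to be a sum of inward normals to the facets containing $F$, which will belong to $\sigma_F$ by the argument in (ii) below) and consider $\omega_t := (1-t)\omega_0 + t\omega_1$ for $t \in (0,1]$. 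A direct check shows that $F_{\omega_t} = F$ for all small $t > 0$, because any vertex $\mu^{(i)} \notin V(F)$ satisfies $\omega_1 \cdot \mu^{(i)} > \nu_{\omega_1}(f)$, which for small $t$ still gives $\omega_t \cdot \mu^{(i)} > \omega_t \cdot \mu^{(j)}$ for $\mu^{(j)} \in V(F)$; taking $t \to 0^+$ gives $\omega_0 \in \overline{\sigma}_F$.

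For (ii), I observe first that each inward primitive normal $u^{(i)}$ satisfies $F_{u^{(i)}} = F_i \supset F$, so $u^{(i)} \in C_F = \overline{\sigma}_F$. Hence $\langle u^{(1)}, \ldots, u^{(k)}\rangle \subset \overline{\sigma}_F$. For the reverse containment, I would argue that if $\omega \in \overline{\sigma}_F \setminus \{0\}$, then $F_\omega$ is a face of $NP(f)$ containing $F$, so $F_\omega$ is itself contained in some facet $F_i$ through $F$ (or equals $NP(f)$ only if $\omega = 0$). By induction on $\dim F_\omega$, or more directly by iterating the process of adding a multiple of a normal $u^{(i)}$ to $\omega$ in order to strictly reduce the face it cuts out, one writes $\omega$ as a nonnegative combination of the $u^{(i)}$. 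The equality $\sigma_F = \{\sum \lambda_i u^{(i)} : \lambda_i > 0\}$ then follows: a strictly positive combination forces every facet $F_i$ through $F$ to contain $F_\omega$, and the intersection $\bigcap_i F_i = F$ pins down $F_\omega = F$, while any combination with some $\lambda_i = 0$ yields $F_\omega \supsetneq F$.

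For (iii), it suffices to show that the linear span of $u^{(1)}, \ldots, u^{(k)}$ has dimension $n - \dim F$. Each $u^{(i)}$ is orthogonal to the affine hull of $F_i$, and since $F = \bigcap_{i=1}^k F_i$ as a face, the affine hull of $F$ is the intersection of the affine hulls of the $F_i$, so the orthogonal complement of the direction space of $\mathrm{aff}(F)$ is exactly the span of $\{u^{(1)}, \ldots, u^{(k)}\}$; this complement has dimension $n - \dim F$. The expected main obstacle is the precise inductive step in (ii) that writes an arbitrary $\omega \in \overline{\sigma}_F$ as a nonnegative combination of the $u^{(i)}$, which is really a statement about the duality between faces of a polyhedron and faces of its normal fan; the cleanest route is probably to invoke the standard fact from polyhedral geometry that the normal cone of a face $F$ of a polyhedron is generated by the inward normals to the facets containing $F$, which reduces the work in (ii) to identifying $\overline{\sigma}_F$ with the normal cone of $F$, a routine verification from the definitions.
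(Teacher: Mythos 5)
The paper itself offers no proof of this lemma --- it is quoted as a standard fact about normal fans, with the section's opening pointer to \cite{Ar-GM-Sha} --- so your proposal has to be judged against the standard argument rather than against anything in the text. Most of it is sound: in (i), writing $C_F=\{\omega\in\Rnnoneg\ ;\ F_\omega\supset F\}$ as a finite intersection of rational closed half-spaces, and the segment $\omega_t=(1-t)\omega_0+t\omega_1$ (which in fact satisfies $F_{\omega_t}=F$ for \emph{every} $t\in(0,1]$, not just small $t$), correctly give $\overline{\sigma}_F=C_F$; and the inclusions $\langle u^{(1)},\dots,u^{(k)}\rangle\subseteq\overline{\sigma}_F$ and $\{\sum\lambda_iu^{(i)}\ ;\ \lambda_i>0\}\subseteq\sigma_F$ are fine, granted the standard fact that a proper face of the full-dimensional polyhedron $NP(f)$ is the intersection of the facets containing it.

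Two points need attention. First, the substantive half of (ii), namely $\overline{\sigma}_F\subseteq\langle u^{(1)},\dots,u^{(k)}\rangle$, is only gestured at: the ``iterate adding multiples of normals'' sketch is not an argument, and the proposed clean route --- identify $\overline{\sigma}_F$ with the normal cone of $F$ and cite that normal cones are generated by the inward facet normals --- is circular, since that citation \emph{is} assertion (ii). Either cite it honestly as standard polyhedral duality (which is in effect what the paper does) or prove it, e.g.\ by a Farkas-type argument. Second, and this is an actual error: the claim that ``any combination with some $\lambda_i=0$ yields $F_\omega\supsetneq F$'' is false whenever $\overline{\sigma}_F$ is not simplicial --- precisely the case the paper cares about, since otherwise no simplicial refinement of $\Sigma(f)$ would ever be needed (Remark \ref{Refining fans}). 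For instance, if $\overline{\sigma}_{\{\vertice\}}$ is a three-dimensional cone over a square with extreme rays $u^{(1)},\dots,u^{(4)}$ in cyclic order, then $u^{(1)}+u^{(3)}$ lies in $\sigma_{\{\vertice\}}$ (here $F_1\cap F_3=\{\vertice\}$) even though $\lambda_2=\lambda_4=0$ in that representation. The asserted equality $\sigma_F=\{\sum\lambda_iu^{(i)}\ ;\ \lambda_i>0\}$ is still true, because the set of strictly positive combinations of any generating set of a polyhedral cone equals its relative interior (if $x\in Int_{rel}\,\overline{\sigma}_F$ and $y=\sum_iu^{(i)}$, then $x-\epsilon y\in\overline{\sigma}_F$ for small $\epsilon>0$, so $x=(x-\epsilon y)+\epsilon y$ admits a representation with all coefficients at least $\epsilon$); what must actually be shown is $\sigma_F=Int_{rel}\,\overline{\sigma}_F$, which follows from the fan structure: $\overline{\sigma}_F$ is the disjoint union of the $\sigma_G$ over faces $G\supseteq F$, and the cones $\overline{\sigma}_G$ with $G\supsetneq F$ are exactly the proper faces of $\overline{\sigma}_F$. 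Part (iii) is fine modulo the standard fact that the affine hull of $F$ is $\bigcap_{i=1}^k\mathcal{H}_{u^{(i)}}$.
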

\begin{defi}
	The collection of cones $\Sigma(f):=\{ \overline{\sigma}_F\ ;\ F \text{ is a face of } NP (f)\}$  is called the \textit{dual fan} of $f$. 
\end{defi}

There is a natural duality between cones in $\Sigma(f)$ and faces of $NP(f)$ given by
\[
F\mapsto \overline{\sigma}_F
\]
and
\[\sigma \mapsto F_\sigma, \]
where $F_\sigma$ is the intersection of the facets having as inward vectors the generators of $\sigma$. 

From the previous construction one has that $\Sigma(f)$ is a polyhedral fan with support $\Rnnoneg$. Moreover, \[\Ver(\Sigma(f))=\mathcal{I}(NP(f)).\]  
We will say that a fan $\Sigma$ \textit{is subordinated to} $f$ if it defines a refinement of $\Sigma(f)$. From now on, we will work with simplicial fans subordinated to $f$.

\begin{prop}\label{El conjunto de exponentes esta contenido en el dual de un buen cono}
	\begin{enumerate}\item $NP(f)=\bigcap_{\vertice\,\mbox{\tiny{vertex of}}\; NP(f)} \left( \vertice+ \overline{\sigma}^{\,\vee}_{\{\vertice\}}\right).$
		\item If $\Sigma$ is a fan subordinated to $f$ and $\sigma\in\Max (\Sigma)$, the face $\vertice := F_\sigma$ of $NP(f)$ is a vertex of $NP (f)$ with $\sigma\subset \overline{\sigma}_{\vertice}$. We have $\varepsilon (f)\subset \vertice+ {\sigma^{\,\vee}}$, thus \[f(x)=x^{\vertice} h_\sigma(x),\]
		where $h_\sigma(x)\in K [[x_1^{\pm 1},\ldots ,x_n^{\pm 1}]]$ satisfies $\underline{0}\in\varepsilon (h_\sigma)\subset \sigma^{\vee}\cap\Z^n$.
		\item Let $\Sigma$ be a simplicial fan subordinated to $f$ and let $\sigma =Cone (\N)\in \Max (\Sigma)$. If $\vertice := F_\sigma$, then
		\[
		L_{\N\trans} (\vertice )=(\nu_{u^{(1)}} (f),\ldots ,\nu_{u^{(n)}} (f))
		\]
		where $u^{(1)} ,\ldots ,u^{(n)}$ are the columns of the matrix $\N$.
	\end{enumerate}
\end{prop}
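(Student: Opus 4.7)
The plan is to tackle the three parts sequentially, each using the previous one. Part (1) is a convex-geometry statement about $NP(f)$; Part (2) uses it together with the duality between faces of $NP(f)$ and cones in $\Sigma(f)$ provided by Lemma~\ref{DualFan}; Part (3) is then an immediate consequence of Part~(2) and the very definition of $\nu_{\omega}(f)$.

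For Part~(1), the inclusion ``$\subseteq$'' is direct: given $\mu\in NP(f)$ and a vertex $\vertice$, for every $\omega\in\overline{\sigma}_{\{\vertice\}}$ the definition of $\overline{\sigma}_{\{\vertice\}}$ yields $\omega\cdot\vertice=\nu_\omega(f)\leq\omega\cdot\mu$, so $\mu-\vertice\in\overline{\sigma}^{\,\vee}_{\{\vertice\}}$. For the reverse inclusion, I would argue by contraposition. If $\mu_0\notin NP(f)$, a separating-hyperplane argument---using the crucial fact that $NP(f)+\Rnnoneg=NP(f)$, which forces the inward normals of all bounding facets to lie in $\Rnnoneg$---provides $\omega\in\Rnnoneg\setminus\{\underline{0}\}$ with $\omega\cdot\mu_0<\nu_\omega(f)$. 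Choosing any vertex $\vertice$ of the face $F_\omega$ (necessarily non-empty) gives $\omega\in\overline{\sigma}_{\{\vertice\}}$ by Lemma~\ref{DualFan}(i), and then $\omega\cdot(\mu_0-\vertice)<0$ shows $\mu_0\notin\vertice+\overline{\sigma}^{\,\vee}_{\{\vertice\}}$.

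For Part~(2), since $\Sigma$ refines $\Sigma(f)$, the $n$-dimensional cone $\sigma\in\Max(\Sigma)$ is contained in some cone $\overline{\sigma}_F\in\Sigma(f)$. By Lemma~\ref{DualFan}(iii), $n=\dim\sigma\leq\dim\overline{\sigma}_F=n-\dim F$, forcing $\dim F=0$, so $F=\{\vertice\}$ is a vertex; thus $\sigma\subset\overline{\sigma}_{\{\vertice\}}$. Dualizing gives $\overline{\sigma}^{\,\vee}_{\{\vertice\}}\subset\sigma^\vee$, and combined with Part~(1), $\varepsilon(f)\subset NP(f)\subset\vertice+\sigma^\vee$. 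Setting $h_\sigma(x):=\sum_{\mu\in\varepsilon(f)}a_\mu x^{\mu-\vertice}$, we obtain $f(x)=x^{\vertice}h_\sigma(x)$, and the previous inclusion means $\varepsilon(h_\sigma)\subset\sigma^\vee\cap\Z^n$. Since every vertex of $NP(f)$ belongs to $\varepsilon(f)$, we have $a_{\vertice}\neq 0$, so $\underline{0}\in\varepsilon(h_\sigma)$.

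For Part~(3), the $i$-th coordinate of $L_{\N\trans}(\vertice)=\N\trans\vertice$ is the inner product $u^{(i)}\cdot\vertice$. Since $u^{(i)}\in\sigma\subset\overline{\sigma}_{\{\vertice\}}$, Lemma~\ref{DualFan}(i) gives $\vertice\in F_{u^{(i)}}$, i.e. $\vertice$ achieves the minimum of $u^{(i)}\cdot x$ over $NP(f)$, so $u^{(i)}\cdot\vertice=\nu_{u^{(i)}}(f)$. The only real obstacle in the whole proof is the separating-hyperplane step of Part~(1); once the normal is shown to lie in $\Rnnoneg$, the remaining arguments are careful unwinding of the definitions.
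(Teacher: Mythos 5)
Your proof is correct and follows the same route the paper intends: the paper's proof is just the one-line remark that everything follows from the definitions and Lemma~\ref{DualFan}, and your argument supplies precisely those details (the supporting-hyperplane description of $\overline{\sigma}_{\{\vertice\}}$ for both inclusions in (1), the dimension count via Lemma~\ref{DualFan}(iii) in (2), and the identity $u^{(i)}\cdot\vertice=\nu_{u^{(i)}}(f)$ in (3)). No gaps.
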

\begin{proof}
	All the statements follow easily from the given definitions and Lemma \ref{DualFan}.
\end{proof}

\subsection{Newton Polyhedron under monomial modifications}
Let $f$ be a polynomial as in \eqref{escritura del polinomio f}. Given $\mu=(\mu_1,\ldots,\mu_n)\in\Z^n$ and $\M=(u_k^{(j)})\in\mathcal{M}(n,\Z)$, we have ${\psi_{\M} (x)}^\mu =x^{\M\cdot\mu}$. Hence, 
\begin{equation} \label{supportfcomppi}
f\circ \psi_{\M} (x)= \sum_{\mu\in \varepsilon(f)} a_{\mu}x^{L_{\M}(\mu)}\quad \text{and} \quad \varepsilon(f\circ\psi_{\M})= L_{\M} (\varepsilon(f)). 
\end{equation}
Then, for any subset $F\subset {(\R_{\geq 0})}^n$,
\begin{equation} \label{restricciones y modificaciones monomiales}
\left. f \right|_F\circ \psi_{\M} (x) = \left.\left( f\circ \psi_{\M} (x)\right) \right|_{L_M (F)}.
\end{equation}

\begin{prop} \label{El poliedro se convierte en el primer cuadrante trasladado}
	Let $\Sigma$ be a simplicial fan subordinated to the  polynomial $f$, and let $\sigma =Cone (\N)\in \Max (\Sigma)$, then the following assertions hold.
	\begin{enumerate}
		\item The polyhedron $NP (f\circ \psi_{\N\trans})$  has only one vertex. Furthermore, we have  \[f\circ \psi_{\N\trans}(x)=x^{L_{\N\trans}(\vertice)}h(x),\]		
		where $h(x)\in K [x_1,\ldots ,x_n]$ satisfies $h(\underline{0})\neq 0$ and  $L_{\N\trans}(\vertice)\in(\Z_{\geq 0})^n$ with $\vertice$ vertex of $NP (f)$.  
		\item If $\tau$ is the face of $\sigma$ generated by the $i^{th}$ columns of $\N$, with $i\in J\subset \{1,\ldots ,n\}$. Then, we have 
		\[ \left( f\circ\psi_{\N^t}\right) |_{L_{\N\trans}(\vertice)+ \langle e^{(i)}\ ;\ i\notin J\rangle}=f|_{F_{\tau}}\circ \psi_{\N^t}, \]
		where $\vertice$ is the vertex of $NP (f)$ such that $\varepsilon (f)\subset \vertice+ \Cone \left( \N\right)^\vee$. 
	\end{enumerate}
\end{prop}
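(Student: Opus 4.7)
The plan is to reduce both parts to direct bookkeeping with the supports of $f$ and $f\circ\psi_{\N\trans}$, using Proposition~\ref{El conjunto de exponentes esta contenido en el dual de un buen cono}, Lemma~\ref{L sub M del dual es el primer ortante}(ii), and equations (\ref{supportfcomppi})–(\ref{restricciones y modificaciones monomiales}). Throughout, write $u^{(1)},\dots,u^{(n)}$ for the columns of $\N$, so that $\sigma=\Cone(\N)$.

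\textbf{Proof of (1).} I would start from Proposition~\ref{El conjunto de exponentes esta contenido en el dual de un buen cono}(2): since $\sigma\in\Max(\Sigma)$, there is a unique vertex $\vertice = F_\sigma$ of $NP(f)$ with $\varepsilon(f)\subset \vertice +\sigma^\vee$, and
$f(x)=x^{\vertice} h_\sigma(x)$ with $\underline{0}\in\varepsilon(h_\sigma)\subset \sigma^\vee\cap\Z^n$. Composing with $\psi_{\N\trans}$ and using $\psi_{\N\trans}(x)^{\mu}=x^{L_{\N\trans}(\mu)}$ gives
\[
f\circ\psi_{\N\trans}(x)=x^{L_{\N\trans}(\vertice)}\,\bigl(h_\sigma\circ\psi_{\N\trans}\bigr)(x).
\]
By Lemma~\ref{L sub M del dual es el primer ortante}(ii), $L_{\N\trans}(\sigma^\vee\cap\Z^n)\subset(\Z_{\geq 0})^n$, so the support of $h:=h_\sigma\circ\psi_{\N\trans}$ lies in the first orthant, and $h$ is an honest polynomial. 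The relation $L_{\N\trans}(\underline{0})=\underline{0}$, together with $\underline{0}\in\varepsilon(h_\sigma)$, yields $h(\underline{0})\neq 0$. Moreover, $\vertice\in(\Z_{\geq 0})^n$ and $\N$ has columns in $(\Z_{\geq 0})^n$ (the inward primitive normals), so $L_{\N\trans}(\vertice)\in(\Z_{\geq 0})^n$. The conclusion about the unique vertex of $NP(f\circ\psi_{\N\trans})$ is then immediate from Remark~\ref{Si solo tiene un vertice es un monomio por una unidad}.

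\textbf{Proof of (2).} The key step is to identify the exponents appearing on each side. By (\ref{restricciones y modificaciones monomiales}),
\[
f|_{F_\tau}\circ\psi_{\N\trans}(x)=\bigl(f\circ\psi_{\N\trans}(x)\bigr)\bigl|_{L_{\N\trans}(F_\tau)},
\]
so it suffices to show that, on the support of $f\circ\psi_{\N\trans}$, the set $L_{\N\trans}(F_\tau)$ cuts out exactly the exponents lying in $L_{\N\trans}(\vertice)+\langle e^{(i)}\,;\, i\notin J\rangle$. For this I would use the duality $\tau\leftrightarrow F_\tau$: since $\tau$ is generated by the $u^{(i)}$ with $i\in J$, a point $x\in NP(f)$ lies in $F_\tau$ if and only if $u^{(i)}\cdot x=\nu_{u^{(i)}}(f)$ for every $i\in J$. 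Because $\vertice=F_\sigma$ is a vertex, $u^{(i)}\cdot\vertice=\nu_{u^{(i)}}(f)$ for all $i$, so for $\mu\in\varepsilon(f)$ the condition $\mu\in F_\tau$ is equivalent to $u^{(i)}\cdot(\mu-\vertice)=0$ for every $i\in J$, i.e.\ to $L_{\N\trans}(\mu-\vertice)\in\langle e^{(i)}\,;\, i\notin J\rangle$.

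\textbf{Wrapping up.} Combining the last equivalence with (\ref{supportfcomppi}) and the injectivity of $L_{\N\trans}$ on $\Z^n$ (which holds since $\N$ is non-singular) identifies
\[
L_{\N\trans}\bigl(\varepsilon(f)\cap F_\tau\bigr)=\varepsilon(f\circ\psi_{\N\trans})\cap\bigl(L_{\N\trans}(\vertice)+\langle e^{(i)}\,;\, i\notin J\rangle\bigr),
\]
and the stated equality of face functions follows by matching coefficients. I do not anticipate a real obstacle: the entire argument is a translation between the ``$\mu$-picture'' on $NP(f)$ and the ``$L_{\N\trans}(\mu)$-picture'' on $NP(f\circ\psi_{\N\trans})$, and the only point requiring a little care is checking that the exponent tails of $h_\sigma$ land in the first orthant after applying $L_{\N\trans}$, which is exactly Lemma~\ref{L sub M del dual es el primer ortante}(ii).
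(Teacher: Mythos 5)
Your proof is correct and follows essentially the same route as the paper's: part (1) is exactly the combination of Remark~\ref{Si solo tiene un vertice es un monomio por una unidad} and Proposition~\ref{El conjunto de exponentes esta contenido en el dual de un buen cono}(2), and part (2) reduces via \eqref{restricciones y modificaciones monomiales} to identifying $L_{\N\trans}(F_\tau)$ with $L_{\N\trans}(\vertice)+\langle e^{(i)}\,;\,i\notin J\rangle$ on the relevant supports. The only (cosmetic) difference is that the paper describes $F_\tau$ as $NP(f)\cap\bigl(\vertice+\langle \{v^{(i)}\}_{i\notin J}\rangle\bigr)$ via the dual generators and pushes that affine cone forward using $L_{\N\trans}(v^{(i)})=\lambda_i e^{(i)}$, whereas you characterize $F_\tau$ by the equations $u^{(i)}\cdot(\mu-\vertice)=0$ for $i\in J$ --- the transposed view of the same diagonality relation $\N\trans\M=\Lambda$.
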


\begin{proof}
	\begin{enumerate}
		\item It follows from Remark \ref{Si solo tiene un vertice es un monomio por una unidad} and Proposition \ref{El conjunto de exponentes esta contenido en el dual de un buen cono}.
		\item  Let $\M\in \mathcal{M}(n,\Z)$ be the matrix with columns $v^{(1)},\ldots , v^{(n)}$, such that $\sigma^\vee=\Cone \left( \M\right)$ and  $\N\trans  \M\in \mathcal{M}(n,\N)$ is the diagonal matrix with positive values $\lambda_1,\dots,\lambda_n$ along its diagonal. Then,
		\begin{align*}
		F_{\tau} & = NP (f) \cap \left( \vertice + \left( \Cone \left( \N\right)^\vee\cap \tau^\perp\right)\right) \\
		& = NP(f) \cap \left( \vertice + \langle \{ v^{(i)}\}_{i\notin J}\rangle\right)
		\end{align*}
		and hence,
		\begin{equation*}
		f |_{F_{\tau}}=f|_{\vertice + \langle \{ v^{(i)}\}_{i\notin J}\rangle}.
		\end{equation*}
		By \eqref{restricciones y modificaciones monomiales},
		\[
		\left. f \right|_{\vertice + \langle \{ v^{(i)}\}_{i\notin J}\rangle}\circ \psi_{\N\trans} (x) = \left.\left( f\circ \psi_{\N\trans} (x)\right) \right|_{L_{\N\trans} \left(\vertice + \langle \{ v^{(i)}\}_{i\notin J}\rangle\right)}.
		\]
		The result follows from the fact that 
		\begin{multline*}
		L_{\N\trans} \left(\vertice + \langle \{ v^{(i)}\}_{i\notin J}\rangle\right) = L_{\N\trans}\! \left(\vertice\right) + \left\langle  L_{\N\trans} \left( v^{(i)}\right); {i\notin J}\right\rangle\\
		= L_{\N\trans}(\vertice)+ \left\langle \lambda_i e^{(i)};i\notin J\right\rangle
		= L_{\N\trans}(\vertice)+ \left\langle e^{(i)};i\notin J\right\rangle.\qedhere
		\end{multline*}
	\end{enumerate}
	
\end{proof}

Under the hypothesis of the second part of Proposition \ref{El poliedro se convierte en el primer cuadrante trasladado}, the following result holds.
\begin{cor}\label{Como cambian las caras a las que hay que restringir}
	There exists a polynomial $h_{\tau}\in K[x_1,\ldots ,x_n]$ depending only on the variables $x_i$ for $i\notin J$ with $h_{\tau}(\underline{0})\neq 0$ such that 
	\[f|_{F_{\tau}}\circ \psi_{\N\trans}(x)=x^{L_{\N\trans}(\vertice)}h_{\tau}(x).\]
\end{cor}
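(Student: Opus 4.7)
The plan is to combine the two parts of Proposition \ref{El poliedro se convierte en el primer cuadrante trasladado} together with the observation in Example \ref{Restringir a esta cara es evaluar en 0} that restriction to a cone spanned by standard basis vectors amounts to setting the complementary coordinates to zero. Concretely, I will factor out the vertex monomial using part (1), and then interpret the restriction in part (2) as a coordinate substitution in the remaining factor.

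First I would invoke Proposition \ref{El poliedro se convierte en el primer cuadrante trasladado}(1) to write
\[
f\circ \psi_{\N\trans}(x)=x^{L_{\N\trans}(\vertice)}h(x),
\]
with $h\in K[x_1,\ldots,x_n]$ and $h(\underline{0})\neq 0$. Multiplication by the monomial $x^{L_{\N\trans}(\vertice)}$ simply translates the support of a polynomial by $L_{\N\trans}(\vertice)$, so restriction of $f\circ\psi_{\N\trans}$ to the translated set $L_{\N\trans}(\vertice)+\langle e^{(i)};\,i\notin J\rangle$ equals $x^{L_{\N\trans}(\vertice)}$ times the restriction of $h$ to the coordinate cone $\langle e^{(i)};\,i\notin J\rangle$.

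Next, by Example \ref{Restringir a esta cara es evaluar en 0}, restricting $h$ to $\langle e^{(i)};\,i\notin J\rangle$ is precisely the evaluation of $h$ obtained by setting $x_i=0$ for every $i\in J$. Define
\[
h_{\tau}(x):=h(x)\Big|_{x_i=0,\; i\in J};
\]
by construction $h_\tau\in K[x_1,\ldots,x_n]$ depends only on the variables $x_i$ with $i\notin J$, and moreover $h_\tau(\underline{0})=h(\underline{0})\neq 0$. Combining this with Proposition \ref{El poliedro se convierte en el primer cuadrante trasladado}(2), which identifies $f|_{F_\tau}\circ\psi_{\N\trans}$ with $(f\circ\psi_{\N\trans})|_{L_{\N\trans}(\vertice)+\langle e^{(i)};\,i\notin J\rangle}$, one obtains
\[
f|_{F_\tau}\circ\psi_{\N\trans}(x)=x^{L_{\N\trans}(\vertice)}h_\tau(x),
\]
as required.

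No real obstacle is expected: once the translation by $L_{\N\trans}(\vertice)$ is factored out, the claim reduces to the elementary fact that restriction to a standard coordinate cone is evaluation at zero in the complementary variables. The only point to double-check is that $h$ is a genuine polynomial (not merely a Laurent polynomial) so that setting coordinates to zero makes sense; this is exactly what Proposition \ref{El poliedro se convierte en el primer cuadrante trasladado}(1) guarantees.
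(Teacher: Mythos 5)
Your argument is correct and is essentially the one the paper intends: the corollary is stated there as an immediate consequence of Proposition \ref{El poliedro se convierte en el primer cuadrante trasladado}, and you supply exactly the missing details (factoring out the vertex monomial via part (1), noting that this translates the support, and using Example \ref{Restringir a esta cara es evaluar en 0} to identify restriction to the coordinate cone with setting $x_i=0$ for $i\in J$, then concluding via part (2)). The verification that $h_\tau(\underline{0})=h(\underline{0})\neq 0$ and that $h$ is a genuine polynomial are the right points to check, and you handle both.
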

\begin{rmk}\label{Generalization holomorphic case}
	It is very likely that the results in this section can be generalized to the case when $f$ is a holomorphic function. This generalization would provide a way of generalizing Theorem \ref{Main Thm} to the holomorphic setting. So far, such a generalization has escaped from the attempts of the authors, but we believe that further work in this direction could be of some interest.
\end{rmk}

	\section{Non-degeneracy Condition}

\begin{defi}\label{nondegerate}
	Let $f(x)$ be a polynomial, such that $f(\underline{0})=0$. We say that $f$ is non--degenerate with respect to a face $F\subseteq NP(f)$ if the system of equations
	$$\left\{  f_{F}(x)=0,\nabla f_{F}(x)=0\right\}$$ has no
	solutions in $(K^\ast)^n$. 
	
	We say that $f$ is non--degenerate with respect to its Newton polyhedron if it is non--degenerate with respect to any face of $NP(f)$. 
\end{defi}
It seems that the non-degeneracy condition was first proposed by Arnol'd in \cite{Arn}, where he uses it to classify critical points of functions. See also \cite{Be-Ku-Ho,Ho77,Ho78,Ho83} and \cite{Kou}.

\begin{prop}\label{NoDeg se preserva bajo transf matricial}
	Let $\Sigma$ be a simplicial fan subordinated to the  polynomial $f$ and let $\sigma=\Cone \left( \N\right)\in\Max (\Sigma)$.
	If the function $f$ is non-degenerate with respect to its Newton polyhedron then $f\circ \psi_{\N\trans}$ is also non-degenerate with respect to $NP (f\circ \psi_{\N\trans})$. 
\end{prop}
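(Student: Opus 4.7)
My plan is to verify non-degeneracy of $f\circ\psi_{\N\trans}$ face-by-face, transporting each condition back to non-degeneracy of $f$ at a corresponding face of $NP(f)$ via the change of variables $\psi_{\N\trans}$, which has non-vanishing Jacobian on the torus by Lemma \ref{Prop:JacdepsiA}.

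First, I would describe all faces of $NP(f\circ\psi_{\N\trans})$. By Proposition \ref{El poliedro se convierte en el primer cuadrante trasladado}(1) together with Remark \ref{Si solo tiene un vertice es un monomio por una unidad}, the polyhedron $NP(f\circ\psi_{\N\trans})$ has a single vertex $L_{\N\trans}(\vertice)$ and coincides with $L_{\N\trans}(\vertice)+\Rnnoneg$. Hence its faces are exactly the translated coordinate faces $G_J:=L_{\N\trans}(\vertice)+\langle e^{(i)}\ ;\ i\notin J\rangle$ for $J\subseteq\{1,\ldots,n\}$. For each such $J$, Proposition \ref{El poliedro se convierte en el primer cuadrante trasladado}(2) identifies the corresponding face function as
\[
(f\circ\psi_{\N\trans})|_{G_J}=f|_{F_\tau}\circ\psi_{\N\trans},
\]
where $\tau$ is the face of $\sigma$ generated by $\{u^{(i)}\ ;\ i\in J\}$ and $F_\tau$ is the associated face of $NP(f)$. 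Here $F_\tau$ is a genuine face of $NP(f)$ because $\Sigma$ is subordinated to $f$, so $\tau$ lies inside a single cone of the dual fan $\Sigma(f)$, and by Lemma \ref{DualFan} the first meet locus $F_\omega$ is constant for $\omega$ in the relative interior of $\tau$.

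Next, I would argue by contradiction. Suppose that for some $J$ there exists $a\in(K^\ast)^n$ with $f|_{F_\tau}\circ\psi_{\N\trans}(a)=0$ and $\nabla(f|_{F_\tau}\circ\psi_{\N\trans})(a)=0$. Set $b:=\psi_{\N\trans}(a)$; since every column of $\N\trans$ belongs to $\Rnnoneg$ and $a\in(K^\ast)^n$, each coordinate of $b$ is a product of nonzero scalars, hence $b\in(K^\ast)^n$ and $f|_{F_\tau}(b)=0$. By the chain rule,
\[
\nabla(f|_{F_\tau}\circ\psi_{\N\trans})(a)=(D\psi_{\N\trans}(a))\trans\,\nabla f|_{F_\tau}(b).
\]
Lemma \ref{Prop:JacdepsiA} gives
\[
\det D\psi_{\N\trans}(a)=\det(\N\trans)\,a_1^{\|u^{(1)}\|-1}\cdots a_n^{\|u^{(n)}\|-1},
\]
which is nonzero because $\det\N\neq 0$ (as $\sigma$ is simplicial of dimension $n$) and $a\in(K^\ast)^n$. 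Therefore $(D\psi_{\N\trans}(a))\trans$ is invertible and $\nabla f|_{F_\tau}(b)=0$. Combined with $f|_{F_\tau}(b)=0$ and $b\in(K^\ast)^n$, this contradicts non-degeneracy of $f$ with respect to the face $F_\tau$ of $NP(f)$, completing the argument.

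The proof is essentially routine once the dictionary between faces of $NP(f)$ and faces of $NP(f\circ\psi_{\N\trans})$ is in place; the only delicate point worth pausing over is confirming that each $F_\tau$ is a face of $NP(f)$, which is precisely where the hypothesis that $\Sigma$ is subordinated to $f$ enters. I do not anticipate a serious obstacle in carrying out this plan.
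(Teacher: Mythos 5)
Your proposal is correct and follows essentially the same route as the paper: both identify the faces of $NP(f\circ \psi_{\N\trans})$ as the translated coordinate faces, use the second part of Proposition \ref{El poliedro se convierte en el primer cuadrante trasladado} to rewrite each face function as $f|_{F_\tau}\circ\psi_{\N\trans}$, and conclude from the fact that $\psi_{\N\trans}$ is a local diffeomorphism of $(K^*)^n$. Your explicit chain-rule and Jacobian computation merely spells out that last step, which the paper states in one line.
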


\begin{proof}
	By the first part of Proposition \ref{El poliedro se convierte en el primer cuadrante trasladado}, the polyhedron $NP (f\circ \psi_{\N\trans})$ has only one vertex $L_{\N\trans}(\vertice)$ and its faces are of the form $E_J:={L_{\N\trans}(\vertice)+ \left\langle e^{(i)}\ ;\ i\notin J\right\rangle}$, for some $J\subset \{1,\ldots ,n\}$, where $\vertice$ is the vertex of $NP (f)$ such that $\varepsilon (f)\subset \vertice+ \Cone \left( \N\right)^\vee$ (that is $\vertice = F_\sigma$). Let $E_J$ be a face of $NP (f\circ \psi_{\N\trans})$ and assume, without loss of generality, that $J=\{1,2,\dots, r\}$. 
	By the second part of Proposition \ref{El poliedro se convierte en el primer cuadrante trasladado},
	\begin{equation*}
	\left( f\circ\psi_{\N^t}\right) |_{E_J} =
	f|_{F_{\tau}}\circ \psi_{\N^t} (x_1,\dots,x_n)
	\end{equation*}
	where $\tau$ is the compact face of $\sigma$ generated by the last $n-r$ columns of $\N$. Since $f$ is non-degenerate, $V(f|_{F_{\tau}})$, the set of $K-$zeroes of $f|_{F_\tau}$, does not have singularities in the coordinate hyperplanes. The result now follows from the fact that $\psi_{\N^t}$ restricted to ${(\C^*)}^n$ is a local diffeomorphism from ${(\C^*)}^n$ to ${(\C^*)}^n$.
\end{proof}

\begin{lemma}\label{monomioporhNoDegimplicahNoDeg}
	Let $g(x), h(x)\in K [x_1,\ldots ,x_n]$ be such that $h(\underline{0})\neq 0$ and
	\begin{equation*}
	g(x)=x^{\alpha}h(x)
	\end{equation*}
	for some $\alpha\in(\Z_{\geq 0})^n$. If $g$ is non-degenerate with respect to $NP(g)$, then $h$ is non-degenerate with respect to $NP(h)$.
\end{lemma}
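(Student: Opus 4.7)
The plan is a direct argument by contrapositive: assume $h$ is degenerate with respect to some face $F'$ of $NP(h)$, and exhibit a corresponding face $F$ of $NP(g)$ at which $g$ is degenerate.

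First I would verify the combinatorial correspondence between faces. Since $g(x) = x^\alpha h(x)$ and $h(\underline 0)\neq 0$, multiplication by $x^\alpha$ simply translates exponents, so $\varepsilon(g) = \alpha + \varepsilon(h)$ and consequently
\[
NP(g) \;=\; \alpha + NP(h).
\]
Because translation preserves supporting hyperplanes (with the same inward normal $\omega\in\Rnnoneg$, one has $\nu_\omega(g) = \omega\cdot\alpha + \nu_\omega(h)$), the map $F' \mapsto F := \alpha + F'$ is a dimension-preserving bijection between faces of $NP(h)$ and faces of $NP(g)$.

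Next I would compare the face functions. For $F = \alpha + F'$ as above, separating the sum defining $g$ according to which exponents lie in $F\cap\varepsilon(g)$ gives
\[
g_F(x) \;=\; \sum_{\nu\in F'\cap\varepsilon(h)} b_\nu\, x^{\alpha+\nu} \;=\; x^\alpha\, h_{F'}(x),
\]
where $b_\nu$ are the coefficients of $h$. Now, if $x_0 \in (K^\ast)^n$ is a common zero of $h_{F'}$ and $\nabla h_{F'}$, then $x_0^\alpha \neq 0$, so $g_F(x_0) = x_0^\alpha h_{F'}(x_0) = 0$; and differentiating via the product rule,
\[
\partial_i g_F(x)\;=\;\alpha_i\, x_1^{\alpha_1}\cdots x_i^{\alpha_i - 1}\cdots x_n^{\alpha_n}\, h_{F'}(x) \;+\; x^{\alpha}\,\partial_i h_{F'}(x),
\]
(with the first term being $0$ when $\alpha_i=0$), which vanishes at $x_0$ since both $h_{F'}(x_0)$ and $\partial_i h_{F'}(x_0)$ vanish. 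Hence $x_0$ is a singular zero of $g_F$ in $(K^\ast)^n$, contradicting the non-degeneracy of $g$ with respect to the face $F$ of $NP(g)$.

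No step looks delicate: the entire content lies in the translation identity $NP(g) = \alpha + NP(h)$ and the product-rule computation, with the hypothesis $x_0 \in (K^\ast)^n$ ensuring that $x_0^\alpha$ is invertible so that the factorization $g_F = x^\alpha h_{F'}$ cleanly transfers degeneracy in both value and gradient.
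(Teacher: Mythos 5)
Your proposal is correct and follows essentially the same route as the paper's proof: both rest on the translation correspondence $F \mapsto F+(-\alpha)$ between faces of $NP(g)$ and $NP(h)$, the identity $g_F = x^\alpha h_{F+(-\alpha)}$, and the product-rule computation of $\nabla g_F$, with the hypothesis $x_0\in(K^\ast)^n$ making $x_0^\alpha$ invertible. The only difference is presentational (you argue by contrapositive, the paper phrases it directly), so nothing further is needed.
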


\begin{proof}
	Note that $F\subseteq NP(g)$ is a face of $NP(g)$ if and only if  $F+(-\alpha)$ is a face of $NP(h)$. Suppose that ${h}|_{F+(-\alpha)}(x)=0$ for some $x\in V(x_1\cdots x_n)$. Since ${g}|_{F}(x)=x^{\alpha}{h}|_{F+(-\alpha)}(x)$, it follows that $g|_{F}(x)=0$ and, furthermore, we have
	\begin{equation*}
	\frac{\partial\left({g}|_{F}\right)}{\partial x_i}(x)=x^{\alpha}\frac{\partial\left({h}|_{F+(-\alpha)}\right)}{\partial x_i}(x)+ x^{\alpha-e_i} {h}|_{F+(-\alpha)}(x).
	\end{equation*}
	Since $g$ is non-degenerate with respect to $NP(g)$, there exists $i\in\{1,\ldots,n\}$ such that
	\begin{equation*}
	0\neq \frac{\partial\left({g}|_{F}\right)}{\partial x_i}(x)=x^{\alpha}\frac{\partial\left({h}|_{F+(-\alpha)}\right)}{\partial x_i}(x),
	\end{equation*}
	which finishes the proof.
\end{proof}

\subsection{Transversality}\label{Section transversality}

Given a point $P\in K^n$ and $r$ polynomials $h_1, \ldots ,h_r$, with $r\leq n$ and $h_i(P)=0$ we say that $V(h_1), \ldots ,V(h_r)$ intersect \textit{transversally} at $P$, when they are all smooth at $P$ and the dimension of the linear subspace generated by $\nabla h_1 (P), \ldots ,\nabla h_r(P)$ is $r$. We say that $V(h_1), \ldots ,V(h_r)$ intersect \textit{transversally}, when they intersect transversally at every common zero $P$ of $V(h_1), \ldots ,V(h_r)$.  
\begin{prop}\label{Si hay transversalidad hay un buen cambio de coordenadas}
	If $V(h_1), \ldots ,V(h_r)$ intersect transversally at $P$, then there exists a neighbourhood $U$ of $P$, a neighbourhood $U'$ of $\underline{0}$  and a diffeomorphism $\eta: (U',\underline{0})\longrightarrow (U,P)$ such that $h_i\circ\eta=x_i$ for $i=1,\ldots ,r$.
	
	And the determinant of the Jacobian of $\eta$ is a $\mathcal{C}^\infty(K)-$function that is different from zero on $U'$.
\end{prop}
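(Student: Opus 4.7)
The proof will be a direct application of the inverse function theorem, so the plan is to augment the map $(h_1,\dots,h_r)$ with $n-r$ affine coordinate functions in order to obtain a map whose Jacobian at $P$ is invertible, and then invert it locally.

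First, I would use the transversality hypothesis to exhibit a suitable completion. By definition the vectors $\nabla h_1(P),\dots,\nabla h_r(P)\in K^n$ are linearly independent, so the $r\times n$ Jacobian matrix of $(h_1,\dots,h_r)$ at $P$ has rank $r$. A standard linear algebra argument (row reduction / choice of a nonvanishing $r\times r$ minor) produces indices $j_1<\cdots<j_{n-r}$ in $\{1,\dots,n\}$ such that the $n\times n$ matrix whose first $r$ rows are $\nabla h_1(P),\dots,\nabla h_r(P)$ and whose remaining rows are the standard basis vectors $e^{(j_1)},\dots,e^{(j_{n-r})}$ is invertible.

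Next, I would define the auxiliary map
\begin{equation*}
H:K^n\longrightarrow K^n,\qquad H(x):=\bigl(h_1(x),\dots,h_r(x),\,x_{j_1}-P_{j_1},\dots,x_{j_{n-r}}-P_{j_{n-r}}\bigr).
\end{equation*}
By construction $H(P)=\underline{0}$ and $\det DH(P)\neq 0$. In the real case $H$ is $\mathcal{C}^\infty$ and the usual inverse function theorem applies directly; in the complex case $H$ is holomorphic, and either the holomorphic inverse function theorem, or a passage to $\mathbb{R}^{2n}$ together with the identity $\det_{\mathbb{R}}(DH)=|\det_{\mathbb{C}}(DH)|^2$, guarantees the invertibility of the real Jacobian. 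In either case there exist open neighbourhoods $U$ of $P$ and $U'$ of $\underline{0}$ such that $H:U\to U'$ is a diffeomorphism of class $\mathcal{C}^\infty(K)$.

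Finally, I would set $\eta:=H^{-1}:U'\to U$. Then $\eta(\underline{0})=P$, and reading off the first $r$ components of the identity $H\circ\eta=\mathrm{Id}_{U'}$ gives $h_i\circ\eta(x)=x_i$ for $i=1,\dots,r$, which is the desired normal form. Moreover, since $\eta$ is a $\mathcal{C}^\infty$ diffeomorphism, $D\eta(x)=(DH(\eta(x)))^{-1}$ for every $x\in U'$, so $\det D\eta$ is a nowhere vanishing $\mathcal{C}^\infty(K)$-function on $U'$. There is no substantive obstacle here: the only subtlety is the uniform treatment of $K=\mathbb{R}$ and $K=\mathbb{C}$, which is handled by the remark that a nonvanishing complex Jacobian forces a nonvanishing real Jacobian.
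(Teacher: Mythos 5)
Your proposal is correct and follows essentially the same route as the paper: both complete $\nabla h_1(P),\dots,\nabla h_r(P)$ to a basis with standard basis vectors, form the auxiliary map $H(x)=(h_1(x),\dots,h_r(x),x_{i_{r+1}}-P_{i_{r+1}},\dots,x_{i_n}-P_{i_n})$, invoke the inverse function theorem, and set $\eta:=H^{-1}$. Your extra remark on handling $K=\C$ via the holomorphic inverse function theorem (or the identity between the real and complex Jacobian determinants) is a welcome clarification that the paper leaves implicit.
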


\begin{proof}
	Let $i_{r+1},\ldots , i_n\in \{1,\ldots ,n\}$ be such that  \[\{ \nabla h_1 (P), \ldots ,\nabla h_r(P), e^{(i_{r+1})},\ldots,e^{(i_n)}\},\] is a base of $K^n$.
	
	Consider the morphism 
	\[
	\begin{array}{cccc}
	H:	& (K^n, P)
	& \longrightarrow
	& (K^n ,\underline{0})\\
	& x	& \mapsto
	& \left( h_1(x),\ldots ,h_r(x), \left(x_{i_{r+1}}-P_{i_{r+1}}\right),\ldots ,\left( x_{i_n}-P_{i_n}\right)\right).
	\end{array}
	\]
	The determinant of the Jacobian of $H$ is different that zero at $P$ then, by the Inverse Function Theorem, there exists a smooth function $H^{-1}: (K^n ,\underline{0})\longrightarrow (K^n, P)$ such that $H\circ H^{-1}$ is the identity. Taking $\eta := H^{-1}$ we have the result.
\end{proof}

\begin{prop}\label{No se anula en cero mas no degenerada implica transversal}
	Let $h\in K[x_1,\ldots, x_n]$ be a non-degenerate polynomial with respect to its Newton polyhedron, with $h(\underline{0})\neq 0$. The variety $V (h)$ intersects transversally with the coordinate hyperplanes.
\end{prop}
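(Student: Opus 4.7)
The plan is to exploit the hypothesis $h(\underline{0})\neq 0$, which forces $\underline{0}\in\varepsilon(h)$ and hence $NP(h)=(\R_{\geq 0})^n$. Every proper face of this polyhedron is a coordinate sub-orthant $F_I=\langle e^{(i)}\,;\, i\in I\rangle$ for some proper $I\subsetneq\{1,\ldots,n\}$, and by Example \ref{Restringir a esta cara es evaluar en 0} the face function $h|_{F_I}$ is the polynomial obtained from $h$ by setting $x_j=0$ for every $j\notin I$. With this description of the faces in hand, the non-degeneracy hypothesis becomes concrete at every stratum.

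To verify transversality, I would fix a nonempty subset $S=\{i_1,\ldots,i_k\}\subseteq\{1,\ldots,n\}$ and a common zero $P\in V(h)\cap\bigcap_{j\in S}V(x_j)$. Since the basis vectors $e^{(i_1)},\ldots,e^{(i_k)}$ are already independent, transversality at $P$ is equivalent to exhibiting an index $\ell\notin S$ with $(\partial h/\partial x_\ell)(P)\neq 0$, which also guarantees $\nabla h(P)\neq 0$ and hence smoothness of $V(h)$ at $P$.

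The central step is to set $I':=\{i\,;\,P_i\neq 0\}$ and work on the face $F:=F_{I'}$. One checks that $I'\cap S=\emptyset$; also $I'\neq\emptyset$ because $P\neq\underline{0}$ (otherwise $h(P)=h(\underline{0})\neq 0$), and $I'\neq\{1,\ldots,n\}$ because $S\neq\emptyset$, so $F$ is indeed a proper face. Every monomial appearing in $h$ but not in $h|_F$ carries some $x_j$ with $j\notin I'$ raised to a positive power; such a monomial, as well as each of its partials with respect to the variables $\{x_i\,;\, i\in I'\}$, vanishes at $P$. Hence $h|_F(P)=h(P)=0$ and $(\partial h/\partial x_i)(P)=(\partial h|_F/\partial x_i)(P)$ for every $i\in I'$.

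Finally, I extend $P$ to a point $\tilde P\in (K^\ast)^n$ by replacing each zero coordinate with an arbitrary nonzero value. Since $h|_F$ is independent of the variables $x_j$ with $j\notin I'$, both $h|_F(\tilde P)=0$ and $(\partial h|_F/\partial x_i)(\tilde P)=(\partial h|_F/\partial x_i)(P)$ for $i\in I'$. Non-degeneracy of $h$ along $F$ then forces some partial $(\partial h|_F/\partial x_\ell)(\tilde P)$ to be nonzero, and this $\ell$ must lie in $I'\subseteq\{1,\ldots,n\}\smallsetminus S$ because $h|_F$ is constant in the remaining variables. Combining with the identity from the previous paragraph yields $(\partial h/\partial x_\ell)(P)\neq 0$, as required. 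The only mildly delicate bookkeeping is certifying that $F$ is a proper face and that passing to $\tilde P$ is a legitimate way to invoke non-degeneracy on $(K^\ast)^n$; once $NP(h)=(\R_{\geq 0})^n$ is identified, the rest is a direct substitution.
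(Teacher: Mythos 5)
Your proposal is correct and follows essentially the same route as the paper's proof: identify the face of $NP(h)$ spanned by the coordinates where the point is nonzero, observe that the face function depends only on those variables so that its relevant partials agree with those of $h$ at the point, extend the point into $(K^\ast)^n$, and invoke non-degeneracy to produce a nonvanishing partial in a direction outside the coordinate hyperplanes being intersected. The only cosmetic difference is that you treat an arbitrary subset $S$ of vanishing coordinates rather than reducing to the maximal one, which changes nothing of substance.
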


\begin{proof}
	If $n=1$, the coordinate hyperplane corresponds to the origin and $h$ does not vanish at $0$. For $n\geq 2$, let $z=(z_1,\ldots, z_n)\in h^{-1}(0)\cap V(x_1\cdots x_n)$. Since $h(\underline{0})\neq 0$, there exists $i\in \{1,\dots,n\}$ such that  $z_i\neq 0$ and, since $z\in V(x_1\cdots x_n)$, there exists $j\in \{1,\dots,n\}$ such that $z_j=0$. Assume, without loss of generality, that $z_1\cdots z_r\neq 0$ and $z_{k}=0$ for $k\in \{r+1,\dots,n\}$.
	
	In what follows we will show that $V(x_{r+1}),\ldots ,V(x_n)$ and $V(h)$ intersect transversally at $z$. Since $h(\underline{0})\neq 0$, the origin is the only vertex of $NP (h)$ and  
	$\langle e^{(1)},\dots, e^{(r)}\rangle$ is a face of $NP (h)$. Now, $h|_{\langle e^{(1)},\ldots, e^{(r)}\rangle}$ is a polynomial in the variables $x_1,\ldots,x_r$, thus
	\[
	\frac{\partial \left(h|_{\langle e^{(1)},\ldots, e^{(r)}\rangle}\right)}{\partial x_i}= 0\quad\text{for}\quad i\in \{r+1,\ldots ,n\}.
	\] 
	Recall from Example \ref{Restringir a esta cara es evaluar en 0} that $h=h|_{\langle e^{(1)},\dots, e^{(r)}\rangle}+\sum_{j=r+1}^n x_j\widetilde{h}_j$, for some polynomials 
	$\widetilde{h}_{r+1},\dots, \widetilde{h}_n\in K[x_1,\ldots,x_n]$. Therefore
	\[\frac{\partial h}{\partial x_i}=\frac{\partial \left(h|_{\langle e^{(1)},\dots, e^{(r)}\rangle}\right)}{\partial x_i}+\sum_{j=r+1}^n x_j\frac{\partial \widetilde{h}_j}{\partial x_i},\] when $i\in \{1,\dots,r\}$. 
	Evaluating at $z=(z_1,\ldots,z_r,0,\ldots,0)$  we get
	\[\frac{\partial h}{\partial x_i}(z)=\frac{\partial \left(h|_{\langle e^{(1)},\dots, e^{(r)}\rangle}\right)}{\partial x_i}(z).\]
	On the other hand, the non--degeneracy condition over $h$ implies that for any $\varepsilon\in(K^\ast)^n$ there exists some index $i\in \{1,\ldots ,r\}$ such that
	\[\frac{\partial\left( h|_{\langle e^{(1)},\dots, e^{(r)}\rangle}\right)}{\partial x_i}(\varepsilon ) \neq 0.\] If $(z_{r+1}^\ast,\ldots,z_n^\ast)\in (K^\ast)^{n-r}$, then $(z_1,\ldots,z_r,z_{r+1}^\ast,\ldots,z_n^\ast)\in (K^\ast)^{n}$ and for $i\in \{1,\ldots ,r\}$ we have
	\[
	\frac{\partial h}{\partial x_i}(z)
	=\frac{\partial h|_{\langle e^{(1)},\dots, e^{(r)}\rangle}}{\partial x_i}(z)
	=\frac{\partial h|_{\langle e^{(1)},\dots, e^{(r)}\rangle}}{\partial x_i}(z_1,\ldots,z_r,z_{r+1}^\ast,\ldots,z_n^\ast),
	\]
	so there should be some index $i\in \{1,\ldots ,r\}$ such that $\frac{\partial h}{\partial x_i}(z)\neq 0.$
	
	Finally, since 
	\[\{ (\nabla x_{r+1}) (z),\ldots ,(\nabla x_n) (z), \nabla h (z) \}=\{ e^{(r+1)}, \ldots ,e^{(n)}, (\nabla h) (z)\}\] and the set on the right hand side generates an $(n-r+1)-$dimensional space, we obtain the result.
\end{proof}

\begin{cor}\label{h intersecta transv}
	Take $f\in K[x_1,\ldots,x_n]$ and assume that it is non-degenerate with respect to its Newton polyhedron. Let $\Sigma$ be a simplicial fan subordinated to $f$ and take $\sigma=\Cone (\N)\in \Max (\Sigma )$. Then $f\circ \psi_{\N\trans}= x^\alpha h$, where  $h(\underline{0})\neq 0$ and $\alpha\in\Znnoneg$. In addition, the variety defined by $h$ intersects transversally the hyperplane coordinates. 
\end{cor}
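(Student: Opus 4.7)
The plan is to chain together the four main results established earlier in this section: Proposition \ref{El poliedro se convierte en el primer cuadrante trasladado}, Proposition \ref{NoDeg se preserva bajo transf matricial}, Lemma \ref{monomioporhNoDegimplicahNoDeg}, and Proposition \ref{No se anula en cero mas no degenerada implica transversal}. Each one supplies exactly one link in the argument, so the work reduces to identifying what plays the role of $\alpha$ and $h$, and then verifying the hypotheses needed at each step.

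First, I would invoke part (1) of Proposition \ref{El poliedro se convierte en el primer cuadrante trasladado} applied to $\sigma = \Cone(\N) \in \Max(\Sigma)$. This produces a factorization
\[
f\circ \psi_{\N\trans}(x) = x^{L_{\N\trans}(\vertice)}\, h(x),
\]
with $\vertice = F_\sigma$ a vertex of $NP(f)$, $h \in K[x_1,\ldots,x_n]$, and $h(\underline{0}) \neq 0$. Setting $\alpha := L_{\N\trans}(\vertice) \in \Znnoneg$ gives the asserted form $f\circ \psi_{\N\trans} = x^\alpha h$.

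Next, I would use Proposition \ref{NoDeg se preserva bajo transf matricial} to transfer non-degeneracy across the monomial change of variables: since $f$ is non-degenerate with respect to $NP(f)$, the composition $f\circ \psi_{\N\trans}$ is non-degenerate with respect to $NP(f\circ \psi_{\N\trans})$. Then Lemma \ref{monomioporhNoDegimplicahNoDeg}, applied to the factorization $f\circ \psi_{\N\trans} = x^\alpha h$ with $h(\underline{0}) \neq 0$, lets me strip off the monomial factor and conclude that $h$ itself is non-degenerate with respect to $NP(h)$.

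Finally, having established that $h$ is a non-degenerate polynomial with $h(\underline{0}) \neq 0$, Proposition \ref{No se anula en cero mas no degenerada implica transversal} directly yields that $V(h)$ intersects transversally the coordinate hyperplanes, which is the desired conclusion. I do not expect any genuine obstacle here, as the corollary is essentially a bookkeeping statement packaging the previous results; the only point requiring attention is to verify that the $\alpha$ produced in step one lies in $\Znnoneg$, which is automatic because $\vertice \in (\Z_{\geq 0})^n$ and the columns of $\N\trans$ belong to the first orthant (since $\sigma \subset \Rnnoneg$), so $L_{\N\trans}(\vertice)$ has non-negative integer entries.
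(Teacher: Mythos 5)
Your proof is correct and is exactly the argument the paper intends (the corollary is stated without proof there, as an immediate consequence of chaining Proposition \ref{El poliedro se convierte en el primer cuadrante trasladado}, Proposition \ref{NoDeg se preserva bajo transf matricial}, Lemma \ref{monomioporhNoDegimplicahNoDeg}, and Proposition \ref{No se anula en cero mas no degenerada implica transversal}). The only nitpick is that non-negativity of $\alpha=L_{\N\trans}(\vertice)$ comes from all entries of $\N$ being non-negative (the vertices of $\sigma$ lie in the first orthant), which is what you mean, so nothing is missing.
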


\subsection{Neighbourhoods and coordinates}

\begin{prop} \label{hay buenas coordenadas para puntos del x a la alpha por h si h nos se anula y es no degenerada}
	Take $h(x)\in K[x_1,\ldots,x_n]$ with $h(\underline{0})\neq 0$ and assume that $h$ is non-degenerate with respect to its Newton polyhedron. Let $g:= x^\alpha h$, for some $\alpha= (\alpha_1,\ldots ,\alpha_n)\in\Znnoneg$ and let $z$ be a point in the coordinate hyperplanes. Set $J_z:= \{ i\ ;\ z_i=0\}$  and choose $k\notin J_z$. Then there exists a neighbourhood $U_z$ of $z$, a neighbourhood $\widetilde{U}_z$ of the origin and a diffeomorphism
	\[
	\eta_z: (\widetilde{U}_z,\underline{0})\longrightarrow  (U_z,z) 
	\]
	with ${(\eta_z)}_j(x)=x_j$ for all $j\in J_z$ and ${(\eta_z)}_j(x)\neq 0$ for all $j\notin J_z$ for all $x\in \widetilde{U}_z$; and such that one of the following holds
	\begin{enumerate}[(i)]
		\item $g\circ \eta_z =\prod_{j\in J_z} {x_j}^{\alpha_j}\tilde{h}$ where $\tilde{h}(x)\neq 0$ for all $x\in \widetilde{U}_z$,
		\item $g\circ \eta_z =\prod_{j\in J_z} {x_j}^{\alpha_j}x_k\tilde{h}$ where $\tilde{h}(x)\neq 0$ for all $x\in \widetilde{U}_z$.
	\end{enumerate}
	In addition the determinant of the Jacobian of $\eta_z$ is a $\mathcal{C}^\infty(K)-$function not vanishing on $\widetilde{U}_z$.
\end{prop}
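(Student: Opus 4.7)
The plan is to split on whether $h(z)\neq 0$ or $h(z)=0$ and build $\eta_z$ explicitly in each case, exploiting the freedom in Proposition \ref{Si hay transversalidad hay un buen cambio de coordenadas} in the second case.

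When $h(z)\neq 0$, I would simply take $\eta_z$ to be the translation $\eta_z(x)=x+z$. Since $z_j=0$ for $j\in J_z$ this gives $(\eta_z)_j(x)=x_j$ for those indices, while for $j\notin J_z$ we have $(\eta_z)_j(x)=x_j+z_j$, which is nonvanishing in a neighborhood of $\underline{0}$ because $z_j\neq 0$. The Jacobian determinant is the constant $1$, and from $g=x^\alpha h$ one reads off
\[ g\circ\eta_z(x)=\prod_{j\in J_z} x_j^{\alpha_j}\cdot\Bigl(\prod_{j\notin J_z}(x_j+z_j)^{\alpha_j}\cdot h(\eta_z(x))\Bigr), \]
with the parenthesized factor smooth and nonzero at $\underline{0}$, so case (i) follows after shrinking $\widetilde{U}_z$.

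When $h(z)=0$, the key input is Proposition \ref{No se anula en cero mas no degenerada implica transversal}, which yields that $V(h)$ intersects the coordinate hyperplanes transversally. At the point $z$ this specializes to the linear independence of $\{e^{(j)}\}_{j\in J_z}\cup\{\nabla h(z)\}$, so there exists at least one index $j^\ast\notin J_z$ with $\partial h/\partial x_{j^\ast}(z)\neq 0$. Consequently the polynomials $\{x_j\}_{j\in J_z}\cup\{h\}$ intersect transversally at $z$, and I would apply Proposition \ref{Si hay transversalidad hay un buen cambio de coordenadas} to straighten them simultaneously into coordinate functions. A direct inspection of that proof shows there is freedom in choosing which source coordinate each polynomial gets mapped to; after precomposing with a permutation of source coordinates (whose Jacobian is $\pm 1$) I would arrange both $(\eta_z)_j(x)=x_j$ for $j\in J_z$ and $h\circ\eta_z(x)=x_{j^\ast}$. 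A further transposition exchanging $x_k$ and $x_{j^\ast}$ in the source, which fixes every $J_z$-coordinate since $k,j^\ast\notin J_z$, upgrades the conclusion to $h\circ\eta_z(x)=x_k$.

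Putting everything together, and using $(\eta_z)_j(\underline{0})=z_j\neq 0$ for $j\notin J_z$ to get nonvanishing on a small enough $\widetilde{U}_z$, the identity $g=x^\alpha h$ yields
\[ g\circ\eta_z(x)=\prod_{j\in J_z} x_j^{\alpha_j}\cdot x_k\cdot\prod_{j\notin J_z}(\eta_z)_j(x)^{\alpha_j}, \]
with the last factor $\mathcal{C}^\infty$ and nonvanishing on $\widetilde{U}_z$, giving case (ii); the Jacobian of $\eta_z$ remains $\mathcal{C}^\infty$ and nonvanishing by Proposition \ref{Si hay transversalidad hay un buen cambio de coordenadas} together with the $\pm 1$ contributions from the auxiliary permutations. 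The only real subtlety is the bookkeeping in the previous paragraph, namely aligning the freely prescribed index $k$ with the straightening variable produced by non-degeneracy; once this is settled, the proposition reduces to the combined content of Propositions \ref{Si hay transversalidad hay un buen cambio de coordenadas} and \ref{No se anula en cero mas no degenerada implica transversal}.
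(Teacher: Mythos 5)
Your proposal is correct and follows essentially the same route as the paper: the same case split on $h(z)\neq 0$ versus $h(z)=0$, the translation $x\mapsto x+z$ in the first case, and the combination of Propositions \ref{No se anula en cero mas no degenerada implica transversal} and \ref{Si hay transversalidad hay un buen cambio de coordenadas} in the second. The only cosmetic difference is the bookkeeping for making $h\circ\eta_z=x_k$: the paper assumes $k=r+1$ without loss of generality and orders the functions accordingly when straightening, whereas you achieve the same thing by an explicit transposition of source coordinates.
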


\begin{proof}
	Let $z$ be a point in the coordinate hyperplanes and set $r:= \# J_z$,
	without loss of generality, suppose that $J_z=\{1, \ldots ,r\}$ and $k=r+1$. 
	\begin{enumerate}[(i)]
		\item Suppose that $h(z)\neq 0$.  Take $U_z$ small enough such that $h$ and  $\{  x\mapsto x_j\ ;\ j= r+1,\ldots , n\}$ do not vanish in $U_z$. Now take:
		$\eta_z: x \mapsto x+z $, $\widetilde{U}_z:= \eta_z^{-1} (U_z)$ and $\tilde{h}(x) := \prod_{j=r+1}^n (x_j+z_j)^{\alpha_j} ( h\circ\eta_z)$. Then $ \tilde{h}$ does not vanish on 
		$\widetilde{U}_z$, 
		\[
		g\circ \eta_z(x) =\prod_{j=1}^r {x}^{\alpha_j}_j\tilde{h}(x),
		\]
		and the determinant of the Jacobian of $\eta_z$ is one.
		\item Suppose that $h(z)=0$. Then, by Proposition \ref{No se anula en cero mas no degenerada implica transversal}, the varieties 
		\[V(x_1),\ldots , V(x_r), V(h)\] intersect transversally. By Proposition \ref{Si hay transversalidad hay un buen cambio de coordenadas}, there exists a neighbourhood $U$ of $z$, a neighbourhood ${U}'$ of the origin, and a diffeomorphism  
		\[
		\eta=(\eta_1,\ldots ,\eta_n): ({U}',\underline{0})\longrightarrow (U,z)
		\]
		such that 
		\begin{equation}\label{h se convierte en x a la r+1}
		\eta_i(x)=x_i\quad\text{for}\quad i=1,\ldots ,r\quad\text{and}\quad h\circ \eta(x)=x_{r+1}.
		\end{equation}
		Since $\eta(\underline{0})=z$, then $\eta_i(\underline{0})\neq 0$ for $i=r+1,\ldots n$, so, we may choose $\widetilde{U}_z\subset U'$ to be a small enough neighbourhood of $\underline{0}$ such that $\eta_i(x)\neq 0$ for all $x\in \widetilde{U}_z$. Set $U_z:=\eta(\widetilde{U}_z)$ and $\tilde{h}(x) := \prod_{j=r+1}^n {\left(\eta_j(x)\right)}^{\alpha_j}$. Then $ \tilde{h}$ does not vanish on 
		$\widetilde{U}_z$ and 
		\[
		g\circ \eta (x)=\prod_{j=1}^r {\left(\eta_j(x)\right)}^{\alpha_j} h\circ\eta(x)\tilde{h}(x)\stackrel{\eqref{h se convierte en x a la r+1}}{=}\prod_{j=1}^r x^{\alpha_j}_j x_{r+1} \tilde{h}(x)
		\]
		where $\tilde{h}$ does not vanish on $\widetilde{U}_z$.
	\end{enumerate}
\end{proof}

\begin{cor}\label{Corolario Los entornos que necesitamos para las buenas coordenadas}
	Let $f$ be a polynomial and assume that $f$ is non--degenerate with respect to its Newton polyhedron. Let $\Sigma$ be a simplicial fan subordinated to $f$ and take $\sigma= Cone (\N)\in\Max(\Sigma)$. With the notation of the previous Proposition we have that one of the following holds:
	\begin{enumerate}
		\item $f\circ \psi_{\N\trans}\circ \eta_z =\prod_{j\in J_z} {x_j}^{\alpha_j}\tilde{h}$ where $\tilde{h}(x)\neq 0$ for all $x\in \widetilde{U}_z$.
		\item $f\circ \psi_{\N\trans}\circ \eta_z =\prod_{j\in J_z} {x_j}^{\alpha_j}x_k\tilde{h}$ where $\tilde{h}(x)\neq 0$ for all $x\in \widetilde{U}_z$.
	\end{enumerate}
	In addition, the determinant of the Jacobian of $\eta_z$ is a $\mathcal{C}^\infty(K)-$function not vanishing on $\widetilde{U}_z$ and $\alpha_i=\nu_{u^{(i)}} (f)$, where $u^{(i)}$ stands for the $i$-th column of $\N$.
\end{cor}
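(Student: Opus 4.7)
The corollary is really a packaging of the preceding results, with the substantive content already in Proposition \ref{hay buenas coordenadas para puntos del x a la alpha por h si h nos se anula y es no degenerada}. My plan is therefore to chain together the earlier statements rather than prove anything new.

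\textbf{Step 1: reduce to the hypothesis of the previous proposition.} By Corollary \ref{h intersecta transv}, the non-degeneracy of $f$ and the simplicial fan $\Sigma$ together with the choice $\sigma=\Cone(\N)\in\Max(\Sigma)$ give a factorization $f\circ\psi_{\N\trans}=x^{\alpha}h$, where $\alpha\in\Znnoneg$, $h(\underline{0})\neq 0$, and $V(h)$ meets the coordinate hyperplanes transversally. To apply Proposition \ref{hay buenas coordenadas para puntos del x a la alpha por h si h nos se anula y es no degenerada} to the function $g:=f\circ\psi_{\N\trans}$, I also need $h$ to be non-degenerate with respect to its own Newton polyhedron. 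This is given by Proposition \ref{NoDeg se preserva bajo transf matricial} (which guarantees non-degeneracy of $f\circ\psi_{\N\trans}$) combined with Lemma \ref{monomioporhNoDegimplicahNoDeg} (which transfers non-degeneracy from $x^\alpha h$ to $h$).

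\textbf{Step 2: invoke the previous proposition.} With the hypotheses of Proposition \ref{hay buenas coordenadas para puntos del x a la alpha por h si h nos se anula y es no degenerada} satisfied by $g=f\circ\psi_{\N\trans}$, take the point $z$, the index set $J_z$, and the index $k\notin J_z$ as in the statement. The proposition directly produces the neighbourhoods $\widetilde{U}_z$, $U_z$ and the diffeomorphism $\eta_z$ with the required two-case normal form for $g\circ\eta_z$, the stated properties of $\eta_z$ (identity in the $J_z$ coordinates, non-vanishing in the others), and a $\mathcal{C}^\infty$ non-vanishing Jacobian determinant. Translating the result for $g$ back to $f\circ\psi_{\N\trans}$ gives cases (1) and (2) of the corollary.

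\textbf{Step 3: identify the exponents.} It remains to show $\alpha_i=\nu_{u^{(i)}}(f)$. Here I would appeal to Proposition \ref{El poliedro se convierte en el primer cuadrante trasladado}(1), which tells us that the unique vertex of $NP(f\circ\psi_{\N\trans})$ is exactly $L_{\N\trans}(\vertice)$, where $\vertice=F_\sigma$ is the vertex of $NP(f)$ dual to $\sigma$; hence the monomial prefactor in the factorization from Step 1 must be $\alpha=L_{\N\trans}(\vertice)$. Then Proposition \ref{El conjunto de exponentes esta contenido en el dual de un buen cono}(3) identifies the coordinates of $L_{\N\trans}(\vertice)$ as $(\nu_{u^{(1)}}(f),\ldots,\nu_{u^{(n)}}(f))$, yielding $\alpha_i=\nu_{u^{(i)}}(f)$.

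There is no real obstacle: once one recognizes that the corollary is precisely Proposition \ref{hay buenas coordenadas para puntos del x a la alpha por h si h nos se anula y es no degenerada} applied to $g=f\circ\psi_{\N\trans}=x^\alpha h$, the only thing to verify is that all hypotheses propagate correctly, which is handled by Corollary \ref{h intersecta transv} and Lemma \ref{monomioporhNoDegimplicahNoDeg}. The mildly delicate point (and the one most likely to be glossed over) is checking that $h$ itself is non-degenerate with respect to $NP(h)$, but Lemma \ref{monomioporhNoDegimplicahNoDeg} is exactly the tool for that, so the proof reduces to a short citation chain.
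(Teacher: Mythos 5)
Your proposal is correct and follows exactly the route the paper intends: the corollary is stated without proof precisely because it is the citation chain you describe (Corollary \ref{h intersecta transv} for the factorization $f\circ\psi_{\N\trans}=x^\alpha h$, Proposition \ref{NoDeg se preserva bajo transf matricial} plus Lemma \ref{monomioporhNoDegimplicahNoDeg} to transfer non-degeneracy to $h$, Proposition \ref{hay buenas coordenadas para puntos del x a la alpha por h si h nos se anula y es no degenerada} for the normal form, and Propositions \ref{El poliedro se convierte en el primer cuadrante trasladado}(1) and \ref{El conjunto de exponentes esta contenido en el dual de un buen cono}(3) to identify $\alpha_i=\nu_{u^{(i)}}(f)$). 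You correctly flagged the one non-trivial hypothesis check, namely the non-degeneracy of $h$ itself.
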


	\section{Local Zeta Functions}

\subsection{Some Integrals.}
In this section we present some results about integrals that will be used later on. The first Lemma is about meromorphic continuation of integrals attached
to monomials over the complex numbers. These results are easy variations of the ones presented in \cite[App. B, Sections 2.2 and 2.9]{Gel-Shi}. The real version of this results are presented in e.g. \cite[Lemme 3.1]{Den-Sar} and \cite[Ch. III, Sect. 4.4.]{Gel-Shi}. Next we present a powerful Lemma about not--injective changes of variables in integrals, one may consult \cite[Thm. 1.6.24]{Csikos} for a real version. The same proof for the real version can be adapted to the complex setting. 

\begin{lemma}\label{Monomial Integrals}
	Let $g$ be a polynomial function over $\C$ and let $\phi$ be a smooth function with compact support in some neighbourhood of the origin of $\C^n$. Assume that 
	$g$ has no zeroes in the support of $\phi$.
	Define for $\operatorname{Re}(s)>0$, $m=(m_1,\ldots,m_n)\in\mathbb{N}^n$ and $\nu=(\nu_1,\ldots,\nu_n) \in(\mathbb{N}\setminus\{0\})^n$, the following integral
	\[
	I(s)=\int_{\C^n}\phi(x)\ x^{2sm+\nu-1}|g(x)|^{2s}\ dx.
	\]
	Then the following assertions hold:
	\begin{enumerate}
		\item $I(s)$ is convergent and defines a holomorphic function on
		\[
		\operatorname{Re}(s)>\max\{-1,-\nu_{1}/2m_{1},\dots,-\nu_{n}/2m_{n}\};
		\]
		\item $I(s)$ admits a meromorphic continuation to the whole complex
		plane, with poles of order at most $n$. Furthermore, the poles belong to
		\[
		\bigcup_{1\leq i\leq n}\left(  -\frac{\nu_{i}+\mathbb{N}}{2m_{i}}\right)
		\cup\left(  -\frac{1+\mathbb{N}}{2}  \right)  .
		\]
		\item Let $\kappa$ be a positive integer and let $s_{0}$ be a candidate
		pole of $I(s)$ with $s_{0}\notin-\left(  1+\mathbb{N}\right)/2$ (resp. $s_{0}\in-\left(  1+\mathbb{N}\right)/2$). A necessary condition for
		$s_{0}$ to be a pole of $I(s)$ of order $\kappa$,  is that
		\[
		Card\left\{  i\ ;\  s_{0}\in -\frac{\nu_{i}+\mathbb{N}}{2m_{i}}\right\}  \geq\kappa  \ \text{(resp.}\  \geq\kappa -1\text{)} .
		\]
	\end{enumerate}
\end{lemma}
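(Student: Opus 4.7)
The plan is to follow the classical Gel'fand--Shilov strategy for complex monomial integrals, translating it to our conventions where $|a|_K=|a|^2$. The essential observation is that $g$ is assumed non-vanishing on $\Supp(\phi)$, so the function $|g(x)|^{2s}=\big(g(x)\overline{g(x)}\big)^s$ is of class $\mathcal{C}^{\infty}$ in $x$ on an open neighbourhood of $\Supp(\phi)$ and entire in $s$; hence it can be absorbed into $\Psi(x,s):=\phi(x)|g(x)|^{2s}$, which is a compactly supported $\mathcal{C}^{\infty}$ family in $x$ with derivatives of polynomial growth in $|\operatorname{Im}(s)|$ uniformly on compact vertical strips. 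All pole behaviour of $I(s)$ must therefore originate from the monomial factor $x^{2sm+\nu-1}$ together with the volume element.

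Next, I would pass to polar coordinates $x_j=r_je^{i\theta_j}$, under which $|dx|=\prod_j r_j\,dr_j\,d\theta_j$, and Taylor-expand $\Psi$ in $(x_j,\bar x_j)$ up to order $N$:
\[
\Psi(x,s)=\sum_{|\alpha|+|\beta|\le N}c_{\alpha,\beta}(s)\,x^{\alpha}\bar x^{\beta}+R_N(x,s),\qquad |R_N(x,s)|\le C_N(s)\,|x|^{N+1}.
\]
The angular integrations kill every monomial with some $\alpha_j\ne\beta_j$ by orthogonality of the characters $e^{ik\theta_j}$, leaving products of one-dimensional radial integrals of the form
\[
\int_0^{\infty}r^{2sm_j+\nu_j-1+2\alpha_j}\rho_j(r)\,dr,
\]
with $\rho_j$ smooth and compactly supported. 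Absolute convergence of each factor for $\operatorname{Re}(2sm_j+\nu_j+2\alpha_j)>-1$, together with dominated convergence in the remaining variables, yields the half-plane of holomorphy claimed in item (1).

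To obtain the meromorphic continuation and locate the candidate poles, I would invoke the standard fact that $\int_0^\infty r^\alpha\rho(r)\,dr$ admits a meromorphic continuation in $\alpha$ to $\C$ with simple poles at $\alpha\in\{-1-k:k\in\mathbb{N}\}$ and residues expressible as derivatives of $\rho$ at $0$. Setting $\alpha=2sm_j+\nu_j-1+2\alpha_j$ with $m_j>0$ locates the $j$-th family of poles at $s\in-(\nu_j+\mathbb{N})/(2m_j)$, while the universal family $-(1+\mathbb{N})/2$ arises from the combination of the volume factor $r_j\,dr_j$ with the holomorphic dependence on $s$ coming from $|g|^{2s}$. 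The remainder $R_N$ produces an integral which is holomorphic for $\operatorname{Re}(s)>-N/2+O(1)$, and letting $N\to\infty$ gives the meromorphic continuation to all of $\C$ together with the inclusion of candidate poles claimed in item (2).

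Finally, each one-dimensional factor contributes at most one simple pole at a fixed $s_0$, so the order of the pole of $I(s)$ at $s_0$ is bounded by the number of factors which are singular at $s_0$. When $s_0\notin-(1+\mathbb{N})/2$ these are precisely the indices $i$ with $s_0\in-(\nu_i+\mathbb{N})/(2m_i)$, giving the threshold $\kappa$; when $s_0\in-(1+\mathbb{N})/2$ the universal family adds one further factor, lowering the threshold by one and producing the ``$\kappa-1$'' alternative in item (3). I expect the main technical obstacle to be the uniform control on compact $s$-strips of the Taylor coefficients $c_{\alpha,\beta}(s)$ and of the remainder $R_N(x,s)$; this is handled by differentiating $|g|^{2s}$ under the integral sign, the non-vanishing of $g$ on $\Supp(\phi)$ ensuring that $|g|^{2s}$ is an analytic family of smooth functions with uniform estimates.
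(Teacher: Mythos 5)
Your overall strategy (pass to polar coordinates, use the angular integration to kill all Taylor terms $x^\alpha\bar x^\beta$ with $\alpha\neq\beta$, and read the poles off explicit radial integrals) is exactly the classical Gel'fand--Shilov computation that the paper itself invokes for this lemma, and the parity observation is essential for getting integer rather than half-integer shifts in the pole families. However, there is a genuine gap in how you produce the meromorphic continuation: a single joint Taylor expansion of $\Psi(x,s)=\phi(x)|g(x)|^{2s}$ at the origin with remainder bound $|R_N(x,s)|\le C_N(s)|x|^{N+1}$ does not suffice, because the obstruction to convergence for $\operatorname{Re}(s)$ small sits along the coordinate hyperplanes $\{x_j=0\}$ and not only at the origin. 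A remainder term divisible by $x_1^{N+1}$, say, improves integrability only in the $x_1$-direction, so $\int R_N(x,s)\prod_j|x_j|^{2sm_j+\nu_j-1}\,|dx|$ still diverges as soon as $\operatorname{Re}(2sm_j+\nu_j)\le 0$ for some $j\ge 2$; the claimed holomorphy of the remainder integral on $\operatorname{Re}(s)>-N/2+O(1)$ is therefore false as stated. The standard repair is to continue one coordinate at a time (apply the one-variable continuation of $\int_0^\delta r^\lambda\rho(r)\,dr$ in $x_1$, treating the remaining variables as parameters, and iterate), or equivalently to use the decomposition $\Psi=\prod_j\bigl(T_j^N+R_j^N\bigr)\Psi$, where $T_j^N$ is the Taylor truncation in $(x_j,\bar x_j)$ to order $N$ and $R_j^N$ the corresponding remainder: each of the resulting $2^n$ pieces is, for every $j$, either polynomial in $(x_j,\bar x_j)$ (so the $x_j$-integral over $|x_j|<\delta$ is a finite sum of explicit terms, each with a single simple pole in $s$) or divisible by a power $N+1$ of $|x_j|$ (so the $x_j$-integral converges on a half-plane receding to $-\infty$ as $N\to\infty$). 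With this in place your counting argument for items (2) and (3) is correct: at a fixed $s_0$ each coordinate contributes at most a simple pole, so the order is bounded by the number of singular coordinates.

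A smaller but conceptually important point: your attribution of the family $-(1+\mathbb{N})/2$ to ``the volume factor combined with $|g|^{2s}$'' is not right. Since $g$ has no zeroes on $\Supp(\phi)$, the factor $|g(x)|^{2s}$ is an entire family of nowhere-vanishing smooth functions and contributes no poles, and the volume element carries no $s$-dependence; under the hypotheses of the lemma every pole already lies in $\bigcup_i-(\nu_i+\mathbb{N})/(2m_i)$, and the set $-(1+\mathbb{N})/2$ is a harmless over-inclusion in the statement. It is singled out because of how the lemma is applied to the integral $I_2(s)$ in the proof of Theorem \ref{Main Thm}, where the marked coordinate $x_k$ enters with exponent pair $(m_k,\nu_k)=(1,1)$, whose pole family is exactly $-(1+\mathbb{N})/2$ and which must not be counted toward the codimension bound of item (3). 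Your argument in fact yields the sharper necessary condition $\mathrm{Card}\{i\ ;\ s_0\in-(\nu_i+\mathbb{N})/(2m_i)\}\ge\kappa$ in all cases, which implies both alternatives of item (3), so no conclusion is lost --- but the stated provenance of that family should be corrected.
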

\begin{lemma}\label{Cambio de variables para recubrimientos}
	Suppose that $U$ and $V$ are open subsets of $\mathbb{C}^n$ and $\Psi: U\to V$ is a map of class
	$\mathcal{C}^{\infty}$. For a point $v \in V$ , denote by $\#\Psi^{-1}(v)$ the number of $\Psi-$preimages of $v$.
	Then for any $\mathcal{C}^{\infty}$-function $g : V \to \C$ we have
	\[\int_Ug(\Psi(u))\ |\det \Psi^\prime (u)|\ du=\int_V g(v)\#\Psi^{-1}(v)\ dv,\]
	provided that both integrals exist. The integrals may not exist, but if any of
	them exists, then the other exists as well.
\end{lemma}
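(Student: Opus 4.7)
The plan is to reduce to the classical injective change of variables formula via a decomposition of $U$. Let $C := \{u \in U : \det \Psi'(u) = 0\}$, where $\det \Psi'$ denotes the real Jacobian determinant of $\Psi$ viewed as a smooth map $\mathbb{R}^{2n} \to \mathbb{R}^{2n}$. On $C$ the integrand of the left-hand side vanishes because of the factor $|\det \Psi'(u)|$, so $C$ contributes nothing to the integral. By Sard's theorem, the image $\Psi(C)$ has Lebesgue measure zero in $V$, hence also contributes nothing to the right-hand side regardless of the (measurable) value of $\#\Psi^{-1}(v)$ there.

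On the open set $U' := U \setminus C$, the inverse function theorem guarantees that $\Psi$ is a local diffeomorphism. Since $U'$ is second-countable, I can choose a countable family $\{W_i\}_{i \in \mathbb{N}}$ of open sets covering $U'$ such that each restriction $\Psi|_{W_i}$ is a diffeomorphism onto its open image $\Psi(W_i)$. Apply the standard disjointification $A_1 := W_1$, $A_{i+1} := W_{i+1} \setminus (W_1 \cup \cdots \cup W_i)$ to obtain Borel sets $A_i \subset W_i$ partitioning $U'$. On each $A_i$, $\Psi$ is injective with smooth inverse on its image, so the classical change of variables formula gives
\[
\int_{A_i} g(\Psi(u))\, |\det \Psi'(u)|\, du = \int_{\Psi(A_i)} g(v)\, dv.
\]
Summing over $i$ (with convergence justified by the assumed existence of the integrals, via monotone or dominated convergence applied to $|g|$), the left-hand side reconstructs $\int_{U'} g(\Psi(u))\, |\det \Psi'(u)|\, du$, which equals the original integral over $U$. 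For the right-hand side, note that for every $v \in V \setminus \Psi(C)$ the set of indices $i$ with $v \in \Psi(A_i)$ is in bijection with $\Psi^{-1}(v) \cap U' = \Psi^{-1}(v)$ (up to the measure-zero set $\Psi(C)$), because the $A_i$ partition $U'$ and $\Psi|_{A_i}$ is injective. Therefore
\[
\sum_i \int_{\Psi(A_i)} g(v)\, dv = \int_V g(v) \sum_i \mathbf{1}_{\Psi(A_i)}(v)\, dv = \int_V g(v)\, \#\Psi^{-1}(v)\, dv.
\]

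The main technical obstacle is the measurability of $v \mapsto \#\Psi^{-1}(v)$ and the interchange of sum and integral; both are handled cleanly by the Borel disjointification, together with Sard's theorem to dispose of the critical values. Beyond that, the argument is just the injective change of variables theorem applied piece by piece, and the conclusion matches the statement of the lemma.
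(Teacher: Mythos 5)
Your proof is correct: it is the standard argument for this statement --- dispose of the critical set via the vanishing of $|\det\Psi'|$ on one side and Sard's theorem on the other, disjointify the set of regular points into countably many Borel pieces on which $\Psi$ is injective, apply the classical injective change of variables on each piece, and justify the interchange of sum and integral by treating $|g|$ first (which also yields the ``one integral exists iff the other does'' clause). Note that the paper itself gives no proof of this lemma, only a citation to the real version in Csikos's notes with the remark that the proof adapts to $\C^n$; your argument is essentially that cited proof, so there is nothing to compare beyond observing that you have supplied the details the paper omits.
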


\subsection{Poles of Complex Local Zeta Functions} Recall that for a given $\omega\in\Rnnoneg$, the \textit{$\omega$-order} of $f(x)$ is defined as $\nu_{\omega}(f):=min\{\omega \cdot \mu\ ;\ \mu\in \varepsilon(f)\}.$
Now, for any $u=( u_{1},\ldots,u_{n})  \in\mathbb{N}^{n}
\setminus\left\{  \underline{0}\right\}  $ satisfying $\nu_{u}(f)  \neq0$, we define the following arithmetic progression
\[
\mathcal{P}(u)  =\left\{  -\frac{||u||
	+k}{2\nu_{u}(f)} \ ;\  k\in\mathbb{N} \right\}  .
\]
The remoteness of $NP(f)$ (also called by Varchenko the distance from the origin to $NP(f)$) is defined as
\[
\nu_{0}(f) =\min_{u\in\Ver(\Sigma(f))}\left\{  \frac{||u||}{2\nu_u(f)  }\right\}.
\]
From Varchenko's work, we have that the number $\nu
_{0}(f)$ has a nice geometric
interpretation: if $\left(  t_{0},\ldots,t_{0}\right)  $ is the intersection
point of the diagonal $\{\left(  t,\ldots,t\right)  \in\mathbb{R}^{n}\ ;\ t\in\mathbb{R}\}$ with the boundary of $NP(f)$, then $\nu_{0}(f)=1/t_{0}$.

\begin{thm}\label{Main Thm}
	Let $f$ be a polynomial over the complex numbers, satisfying $f(\underline{0})=0$. Let $NP(f)$ be the Newton polyhedron of $f$ and let $\Sigma$ be a simplicial fan subordinated to $f$.  Assume that $f$ is non-degenerate with respect to $NP(f)$, then there exists a neighborhood $\Omega$ of the origin such that, for every  smooth function $\phi$ with compact support contained in $\Omega$, the following assertions hold.
	\begin{enumerate}
		\item The function $Z_{\phi}(s,f)$ is holomorphic on the complex half-plane $\operatorname{Re}(s)>\max\left\{  -\nu_{0}(f),-1/2\right\}$.
		\item The poles of $Z_{\phi}(s,f)$ belong to the set 
		\[
		\bigcup_{u\in \Ver(\Sigma)}\mathcal{P}(u)  \cup\left(-\frac{1+\mathbb{N}}{2}\right).  
		\]
		\item Let $\kappa$ be an integer satisfying $1\leq\kappa\leq n$, and let $s_{0}$ be a candidate pole of $Z_{\phi}\left(  s,f\right)$ with $s_{0}\notin-(1+\mathbb{N})/2  $ (respectively $s_{0}\in-(1+\mathbb{N})/2  $). A necessary condition for $s_0$ to be a pole of  $Z_{\phi}\left(  s,f\right)$ of order $\kappa$, is that there exists a face $F\subset NP(f)$ of codimension $\kappa$ (respectively of codimension $\kappa-1$) such that $s_{0}\in\mathcal{P}(u)$ for any facet $F_u$ containing $F$.
	\end{enumerate}
\end{thm}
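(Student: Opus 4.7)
The plan is to decompose $Z_\phi(s,f)$ into a finite sum of monomial integrals for which Lemma~\ref{Monomial Integrals} applies. The reduction proceeds in two stages: first lift to the toric variety $T_\Sigma$ via the toric modification $\pi$, then descend back to $\C^n$ via the coverings $\Phi_\sigma$. Since $\pi:T_\Sigma\to\C^n$ is proper, birational and an isomorphism away from the coordinate hyperplanes, I fix a $\mathcal{C}^\infty$ partition of unity $\{\chi_\sigma\}_{\sigma\in\Max(\Sigma)}$ on $T_\Sigma$ subordinated to $\{T_\sigma\}$ and write $Z_\phi(s,f)=\sum_\sigma Z_\sigma(s)$. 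On each chart $\Phi_\sigma|_{(\C^*)^n}\to T_\sigma^*$ is a finite covering, so Lemma~\ref{Cambio de variables para recubrimientos} together with the identity $\pi_\sigma\circ\Phi_\sigma=\psi_{\N\trans}$ from Diagram~\ref{Diagrama de covering}, the Jacobian formula of Lemma~\ref{Prop:JacdepsiA}, Proposition~\ref{El poliedro se convierte en el primer cuadrante trasladado} and Proposition~\ref{El conjunto de exponentes esta contenido en el dual de un buen cono}(3) let me rewrite $Z_\sigma(s)$, up to a nonzero constant, as
\[\int_{\C^n}(\chi_\sigma\circ\Phi_\sigma)(\phi\circ\psi_{\N\trans})\,|h(x)|^{2s}\prod_{j=1}^n|x_j|^{2s\nu_{u^{(j)}}(f)+2(\|u^{(j)}\|-1)}\,dx,\]
where $h(\underline 0)\neq 0$ and, by Proposition~\ref{NoDeg se preserva bajo transf matricial} together with Lemma~\ref{monomioporhNoDegimplicahNoDeg}, $h$ is non-degenerate with respect to its Newton polyhedron.

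Next I localize on $\C^n$. At each point $z$ in the compact support of the integrand, Corollary~\ref{Corolario Los entornos que necesitamos para las buenas coordenadas} supplies a neighborhood $U_z$, a diffeomorphism $\eta_z$ with smooth non-vanishing Jacobian, and coordinates in which either (i) $f\circ\psi_{\N\trans}\circ\eta_z=\prod_{j\in J_z}x_j^{\nu_{u^{(j)}}(f)}\tilde h$ with $\tilde h$ nowhere zero, or (ii) the same expression multiplied by a single extra coordinate $x_k$ with $k\notin J_z$. Choosing a finite subcover of $\{U_z\}$ and a subordinated $\mathcal{C}^\infty$ partition of unity and substituting $\eta_z$, each local piece becomes
\[\int_{\widetilde U_z}\Psi(x)\prod_{j\in J_z}|x_j|^{2s\nu_{u^{(j)}}(f)+2(\|u^{(j)}\|-1)}\cdot\bigl(|x_k|^{2s}\bigr)^{\epsilon}\,dx,\]
with $\Psi$ smooth and compactly supported (absorbing the non-vanishing factors $|\tilde h|^{2s}$, the terms $|(\eta_z)_j|^{2(\|u^{(j)}\|-1)}$ for $j\notin J_z$ and the Jacobian of $\eta_z$), and $\epsilon\in\{0,1\}$ according to the case. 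The smallness of $\Omega$ is used here to guarantee that the corollary applies uniformly on the pullback of $\Supp\phi$. Lemma~\ref{Monomial Integrals} now applies with $m_j=\nu_{u^{(j)}}(f)$, $\nu_j=2\|u^{(j)}\|-1$ for $j\in J_z$ (and $m_k=\nu_k=1$ when $\epsilon=1$), yielding parts (1) and (2): the rightmost candidate pole in $\bigcup_u\mathcal P(u)$ is $-\nu_0(f)$ by definition of the remoteness, while the candidate set $-(1+\mathbb{N})/2$ arises precisely from case (ii).

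For part (3), Lemma~\ref{Monomial Integrals}(3) bounds the order of each local piece at $s_0$ by $|J_z^{s_0}|:=\#\{j\in J_z:s_0\in\mathcal P(u^{(j)})\}$, augmented by one in case (ii) when $s_0\in-(1+\mathbb{N})/2$. A pole of order $\kappa$ at $s_0$ forces some $z$ to yield at least $\kappa$ (resp.\ $\kappa-1$) such indices; their associated generators span a face $\tau'$ of $\sigma$ of dimension $\kappa$ (resp.\ $\kappa-1$), and by Lemma~\ref{DualFan}(ii)--(iii) its dual face $F=F_{\tau'}\subset NP(f)$ has codimension $\kappa$ (resp.\ $\kappa-1$) while the facets of $NP(f)$ containing $F$ are exactly those whose inward normals are the generators of $\tau'$, each of which satisfies $s_0\in\mathcal P(u)$ by construction. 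The main obstacle I foresee is the bookkeeping in this last stage: one must verify that the real-Jacobian exponents $2(\|u^{(j)}\|-1)$ from Lemma~\ref{Prop:JacdepsiA}, combined with the convention $|\cdot|_\C=|\cdot|^2$ and the pole formula of Lemma~\ref{Monomial Integrals}, produce elements precisely inside $\mathcal P(u^{(j)})$; that the smooth non-vanishing factors absorbed into $\Psi$ cannot cancel poles of the meromorphic continuation; and that the generators of $\tau'$ actually lie in $\Ver(\Sigma(f))$ so that $F_{\tau'}$ is a genuine face of $NP(f)$. For the latter point one takes $\Sigma$ to be a simplicial refinement of $\Sigma(f)$ introducing no new rays, as permitted by Remark~\ref{Refining fans}.
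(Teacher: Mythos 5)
Your proposal is correct and follows essentially the same route as the paper's proof: toric modification plus a partition of unity on $T_\Sigma$, descent through the coverings $\Phi_\sigma$ so that each chart reduces to $f\circ\psi_{\N\trans}$, localization via Corollary \ref{Corolario Los entornos que necesitamos para las buenas coordenadas}, and Lemma \ref{Monomial Integrals}; your counting argument for part (3) is in fact spelled out more fully than in the paper. The bookkeeping you flag is settled by keeping every absolute value in the paper's normalization $|\cdot|=|\cdot|_{\C}^{2}$: then the Jacobian contributes $|x_j|^{\|u^{(j)}\|-1}$ rather than $|x_j|^{2(\|u^{(j)}\|-1)}$, Lemma \ref{Monomial Integrals} is applied with $m_j=\nu_{u^{(j)}}(f)$ and $\nu_j=\|u^{(j)}\|$, and the resulting candidate set is exactly $\mathcal{P}(u^{(j)})$, whereas your choice $\nu_j=2\|u^{(j)}\|-1$ double-counts the factor of two and would land you in a proper subset of $\mathcal{P}(u^{(j)})$.
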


\begin{proof}
	We will use the notation of Section \ref{Seccion NP and non deg}. Let $\Sigma(f)$ be the dual fan of $f$ and take $\Sigma$ a simplicial fan subordinated to $f$, i.e.  such that $\Ver(\Sigma)=\Ver(\Sigma (f))$, see Remark \ref{Refining fans}. Let $T_{\Sigma}$ be the toric variety associated to $\Sigma$ and let $\pi :T_\Sigma\longrightarrow \C^n$ be the toric modification associated to $\Sigma(f)$. The restriction
	\[
	\pi: {T_{\Sigma}}^*:=T_{\Sigma}\setminus \pi^{-1} ( V(x_1\cdots x_n))  \longrightarrow   {(\C^*)}^n
	\]
	is a diffeomorphism. By using $\pi: {T_\Sigma}^*\rightarrow   {(\C^*)}^n $ as a change of variables in the integral
	\[
	Z_{\phi}(s,f)=\int\limits_{({\mathbb{C}^*)}^{n}\smallsetminus \{f^{-1}(0)  \}}\phi\left(  x\right)  \left\vert f(x)\right\vert ^{2s}|dx|,
	\]
	we have
	\[
	Z_{\phi}(s,f) = \int\limits_{ {T_\Sigma}^*\smallsetminus \{\pi^{-1}(f^{-1}(0))  \} }  (\phi\circ \pi)\left( t\right)  \left\vert (f\circ \pi)(t)\right\vert^{2s}\ |J_\pi(t)|\ |dt|,
	\]
	where $|dt|$ is a volume element in $ {T_\Sigma}^*$ and $J_\pi$ denotes the Jacobian matrix of $\pi$.
	
	Since $\pi :T_\Sigma\longrightarrow \C^n$  is proper, the support of $\phi\circ \pi$ as a function over $T_\Sigma$ is compact. We will take $\{ \xi_\sigma \}_{\sigma\in\Max (\Sigma)}$  a $\mathcal{C}^\infty-$partition of the unity of $T_\Sigma$ subordinated to the covering $\{ T_\sigma\}_{\sigma\in\Max (\Sigma)}$ (see subsection \ref{Particiones de unidad}). Then
	\begin{equation}\label{localizacion de Z}
	Z_{\phi}(s,f) = \sum_{\sigma\in\Max (\Sigma)}\int_{{T_\Sigma}^\ast\smallsetminus \{\pi^{-1}(f^{-1}(0))\}}  (\phi\circ\pi)(t)\xi_\sigma(t)\ |(f\circ\pi)(t)|^{2s}\ |J_\pi(t)|\ |dt|.
	\end{equation}
	Note that the function $\vartheta_\sigma := (\phi\circ \pi) \xi_\sigma$, is a $\mathcal{C}^\infty-$function with compact support contained in the chart $T_\sigma$. For every $\sigma\in\Max (\Sigma )$, we take
	\begin{equation*}
	Z_{\phi}(s,f)^\sigma : =  	\int\limits_{ {T_\sigma}^*\smallsetminus \{\pi_\sigma^{-1}(f^{-1}(0))  \} }  \vartheta_\sigma (t)\ |(f\circ \pi_\sigma)(t)|^{2s}\ |J_{\pi_\sigma}(t)|\ |dt|.
	\end{equation*}
	In view of this notation, \eqref{localizacion de Z} becomes:
	\begin{equation}\label{decomposition of Z}
	Z_{\phi}(s,f) = \sum_{\sigma\in\Max (\Sigma)}Z_{\phi}(s,f)^\sigma.
	\end{equation}
	Now our task is to compute $Z_{\phi}(s,f)^\sigma$. To do so, take a cone $\sigma =Cone (\N )\in\Max(\Sigma)$ and set $\M$ to be the matrix with $\sigma^\vee =\Cone (\M)$, and such that $\Lambda := \N\trans \M$ is diagonal. Recall from Definition \ref{La parametrizacion de la carta asociada a sigma}, that $\Phi_\sigma: \C^n\longrightarrow T_\sigma$ is given by $\Phi_\sigma (x) = (\psi_\Lambda (x), \psi_{\N\trans}(x), \psi_{{\N\trans} W} (x))$. The restriction of $\Phi_\sigma$ to $(\C^\ast)^n$ is a finite (locally diffeomorphism) covering, lets say of degree $d$. By Lemma \ref{Cambio de variables para recubrimientos},
	\begin{align*}
	&Z_{\phi}(s,f)^\sigma  =\\
	& \frac{1}{d}\int\limits_{ {(\C^*)}^n\smallsetminus \{{(f\circ\pi_\sigma\circ\Phi_\sigma)}^{-1}(0)  \}} \vartheta_\sigma (\Phi_\sigma (x))\ |(f\circ \pi_\sigma)(\Phi_\sigma (x))|^{2s}\ |J_{\pi_\sigma} (\Phi_\sigma (x))|\ |J_{\Phi_\sigma} (x)|\ |d  x|\\
	& = \frac{1}{d}
	\int\limits_{ {(\C^*)}^n\smallsetminus \{{(f\circ\pi_\sigma\circ\Phi_\sigma)}^{-1}(0)  \} }  (\vartheta_\sigma \circ \Phi_\sigma) (x) \left\vert (f\circ \pi_\sigma\circ \Phi_\sigma) (x)\right\vert^{2s}\ |J_{(\pi_\sigma\circ\Phi_\sigma)} (x)|\ |d  x| \\
	&\stackrel{\eqref{Diagrama de covering}}{=} \frac{1}{d} \int\limits_{ {(\C^*)}^n\smallsetminus \{{(f\circ\psi_{\N\trans})}^{-1}(0)  \} }  (\vartheta_\sigma \circ \Phi_\sigma) (x) \left\vert (f\circ \psi_{\N\trans}) (x)\right\vert^{2s}\ |J_{\psi_{\N\trans}} (x)|\ |d  x|.
	\end{align*}
	By Lemma \ref{Prop:JacdepsiA},
	\begin{align*}
	&Z_{\phi}(s,f)^\sigma \\
	&=\frac{1}{d} \int\limits_{ {(\C^*)}^n\smallsetminus \{{(f\circ\psi_{\N\trans})}^{-1}(0)  \} }  (\vartheta_\sigma \circ \Phi_\sigma) (x) \left\vert (f\circ \psi_{\N\trans}) (x)\right\vert^{2s}\ |\det \N\trans| \prod_{i=1}^{n} |x_i|^{||u^{(i)}||-1} \ |dx|\\
	&= \frac{1}{d} \int\limits_{ {(\C^*)}^n\smallsetminus \{{(f\circ\psi_{\N\trans})}^{-1}(0)  \} }  \tilde{\vartheta}_\sigma(x) \left\vert (f\circ \psi_{\N\trans}) (x)\right\vert^{2s}\ \prod_{i=1}^{n} |x_i|^{||u^{(i)}||-1}\ |dx|,
	\end{align*}
	where $\tilde{\vartheta}_\sigma :=  |det \N\trans| (\vartheta_\sigma\circ\Phi_\sigma)$. Since $\vartheta_\sigma$ has compact support and $\Phi_\sigma$ is a smooth and finite covering, $\tilde{\vartheta}_\sigma: \C^n\longrightarrow \C$ is a smooth function with compact support. 
	
	In the next step we use the knowledge that we have about the term $f\circ\psi_{\N\trans}$. Consider a point $z\in V(x_1\cdots x_n)\cap Supp (\tilde{\vartheta}_\sigma)$ and let $U_z$ be a neighbourhood of $z$ as in Corollary \ref{Corolario Los entornos que necesitamos para las buenas coordenadas}. Note that 
	\[V(x_1\cdots x_n)\cap Supp (\tilde{\vartheta}_\sigma)\subset  \bigcup_{z\in V (x_1\cdots x_n)\cap Supp (\tilde{\vartheta}_\sigma)}U_z, \] which implies, since $Supp (\tilde{\vartheta}_\sigma)$ is compact, that there exists a finite subset 
	$W\subseteq V (x_1\cdots x_n)\cap Supp (\tilde{\vartheta}_\sigma)$, with
	\[V(x_1\cdots x_n)\cap Supp (\tilde{\vartheta}_\sigma)\subset  \bigcup_{z\in W}U_z.\]
	By shrinking $Supp (\phi)$ if necessary, we may assume that $Supp (\tilde{\vartheta}_\sigma)\subset \bigcup_{z\in W} U_z$ and then we have to deal with integrals of type $Z_\phi(s,f)^\sigma$ over $U_z$. 
	
	Since $U_z$ has been chosen as in Corollary \ref{Corolario Los entornos que necesitamos para las buenas coordenadas}, there exist: a set of indices $J_z\subseteq\{1,\ldots,n\}$, a marked index $k\in\{1,\ldots,n\}\setminus J_z$, a neighbourhood $\widetilde{U}_z$ of $\underline{0}$ and a diffeomorphism $\eta_z: (\widetilde{U}_z,\underline{0})\longrightarrow  (U_z,z)$ such that 
	$$f\circ \psi_{\N\trans}\circ \eta_z =\prod_{j\in J_z} {x_j}^{\alpha_j}\tilde{h}\quad\text{ or }\quad f\circ \psi_{\N\trans}\circ \eta_z =\prod_{j\in J_z} {x_j}^{\alpha_j}x_k\tilde{h},$$ where $\tilde{h}(x)\neq 0$ for all $x\in \widetilde{U}_z$.
	
	By using $\eta_z$ as a change of variables in our integral $Z_\phi(s,f)^\sigma$ over $U_z$, we get that $Z_\phi(s,f)^\sigma$ becomes a finite sum of integrals of types $I_1(s)$ and $I_2(s)$, where
	\begin{align*}
	&I_1(s)=\\
	&\bigintss\limits_{\widetilde{U}_z}
	\Biggl| \Biggl(\prod_{j\in J_z} {x_j}^{\nu_{u^{(j)}}(f)}\!\!\Biggr)\tilde{h}(x)\Biggr|^{2s} \!\!\!\prod_{j\in J_z}\!\! |x_j|^{||u^{(j)}||-1}\!\! \prod_{j\notin J_z} \!\!|{(\eta_z)}_j|^{||u^{(j)}||-1}
	{\theta}_{\sigma ,z}(x)\ |J_{\eta_z}(x)||d  x|,
	\end{align*}
	and
	\begin{align*}
	&I_2(s)=\\
	&\bigintss\limits_{\widetilde{U}_z}
	\Biggl| \Biggl(\prod_{j\in J_z} {x_j}^{\nu_{u^{(j)}}(f)}\!\!\Biggr)x_l\tilde{h}(x)\Biggr|^{2s} \!\!\!\!\prod_{j\in J_z} \!\! |x_j|^{||u^{(j)}||-1}\!\! \prod_{j\notin J_z} \!\!|{(\eta_z)}_j|^{||u^{(j)}||-1}
	{\theta}_{\sigma ,z}(x)\ |J_{\eta_z}(x)||d  x|.
	\end{align*}
	As the reader may have guessed, we have set $\theta_{\sigma ,z}=\tilde{\vartheta}_\sigma\circ\eta_z$. Finally, note that
	\begin{equation}\label{Final 1}
	I_1(s)=\bigintsss\limits_{\widetilde{U}_z}
	\prod_{j\in J_z} |x_j|^{2s\nu_{u^{(j)}}(f)+||u^{(j)}||-1}|\tilde{h}(x)|^{2s} \tilde{\theta}_{\sigma ,z}(x)\ |d  x|,
	\end{equation}
	and
	\begin{equation}\label{Final 2}
	I_2(s)=\bigintsss\limits_{\widetilde{U}_z}
	\prod_{j\in J_z} |x_j|^{2s\nu_{u^{(j)}}(f)+||u^{(j)}||-1}|x_k\tilde{h}(x)|^{2s} \tilde{\theta}_{\sigma ,z}(x)\ |d  x|.
	\end{equation}
	
	All the assertions of our theorem follows now from \eqref{decomposition of Z} and Lemma \ref{Monomial Integrals} applied to \eqref{Final 1} and \eqref{Final 2}.
\end{proof}
\begin{rmk}\label{Weakening the non degeneracy condition}
	By shrinking $\Omega$ in the hypothesis of Theorem \ref{Main Thm}, if necessary, one may just ask for $f$ to be non-degenerate with respect to the \textit{compact} faces of $NP(f)$.
\end{rmk}

\bibliographystyle{plain}

\end{document}